\newcommand*{\mint}[1]{%
  \mint@l{#1}{}%
}
\newcommand*{\mint@l}[2]{%
  \@ifnextchar\limits{%
    \mint@l{#1}%
  }{%
    \@ifnextchar\nolimits{%
      \mint@l{#1}%
    }{%
      \@ifnextchar\displaylimits{%
        \mint@l{#1}%
      }{%
        \mint@s{#2}{#1}%
      }%
    }%
  }%
}
\newcommand*{\mint@s}[2]{%
  \@ifnextchar_{%
    \mint@sub{#1}{#2}%
  }{%
    \@ifnextchar^{%
      \mint@sup{#1}{#2}%
    }{%
      \mint@{#1}{#2}{}{}%
    }%
  }%
}
\def\mint@sub#1#2_#3{%
  \@ifnextchar^{%
    \mint@sub@sup{#1}{#2}{#3}%
  }{%
    \mint@{#1}{#2}{#3}{}%
  }%
}
\def\mint@sup#1#2^#3{%
  \@ifnextchar_{%
    \mint@sup@sub{#1}{#2}{#3}%
  }{%
    \mint@{#1}{#2}{}{#3}%
  }%
}
\def\mint@sub@sup#1#2#3^#4{%
  \mint@{#1}{#2}{#3}{#4}%
}
\def\mint@sup@sub#1#2#3_#4{%
  \mint@{#1}{#2}{#4}{#3}%
}
\newcommand*{\mint@}[4]{%
  \mathop{}%
  \mkern-\thinmuskip
  \mathchoice{%
    \mint@@{#1}{#2}{#3}{#4}%
        \displaystyle\textstyle\scriptstyle
  }{%
    \mint@@{#1}{#2}{#3}{#4}%
        \textstyle\scriptstyle\scriptstyle
  }{%
    \mint@@{#1}{#2}{#3}{#4}%
        \scriptstyle\scriptscriptstyle\scriptscriptstyle
  }{%
    \mint@@{#1}{#2}{#3}{#4}%
        \scriptscriptstyle\scriptscriptstyle\scriptscriptstyle
  }%
  \mkern-\thinmuskip
  \int#1%
  \ifx\\#3\\\else_{#3}\fi
  \ifx\\#4\\\else^{#4}\fi
}
\newcommand*{\mint@@}[7]{%
  \begingroup
    \sbox0{$#5\int\m@th$}%
    \sbox2{$#5\int_{}\m@th$}%
    \dimen2=\wd0 %
    \let\mint@limits=#1\relax
    \ifx\mint@limits\relax
      \sbox4{$#5\int_{\kern1sp}^{\kern1sp}\m@th$}%
      \ifdim\wd4>\wd2 %
        \let\mint@limits=\nolimits
      \else
        \let\mint@limits=\limits
      \fi
    \fi
    \ifx\mint@limits\displaylimits
      \ifx#5\displaystyle
        \let\mint@limits=\limits
      \fi
    \fi
    \ifx\mint@limits\limits
      \sbox0{$#7#3\m@th$}%
      \sbox2{$#7#4\m@th$}%
      \ifdim\wd0>\dimen2 %
        \dimen2=\wd0 %
      \fi
      \ifdim\wd2>\dimen2 %
        \dimen2=\wd2 %
      \fi
    \fi
    \rlap{%
      $#5%
        \vcenter{%
          \hbox to\dimen2{%
            \hss
            $#6{#2}\m@th$%
            \hss
          }%
        }%
      $%
    }%
  \endgroup
}
\def\rr{{\mathbb R}}
\def\rn{{{\rr}^n}}
\def\fz{\infty}
\def\az{\alpha}
\def\dist{{\mathop\mathrm{\,dist\,}}}
\def\loc{{\mathop\mathrm{\,loc\,}}}
\def\lz{\lambda}
\def\dz{\delta}
\def\bdz{\Delta}
\def\ez{\epsilon}
\def\bz{\beta}
\def\gz{{\gamma}}
\def\wz{\widetilde}
\def\bint{{\ifinner\rlap{\bf\kern.35em--}
\int\else\rlap{\bf\kern.45em--}\int\fi}\ignorespaces}
\def\bbint{{\ifinner\rlap{\bf\kern.35em--}
\hspace{0.078cm}\int\else\rlap{\bf\kern.45em--}\int\fi}\ignorespaces}
\def\esssup{{\rm \,esssup\,}}
\def\essinf{{\rm \,essinf\,}}
\newtheorem{thm}{Theorem}[section]
\newtheorem{lem}[thm]{Lemma}
\newtheorem{rem}[thm]{Remark}
\numberwithin{equation}{section}
\title
{\Large\bf  Interior H\"older regularity for stable solutions to
semilinear elliptic equations up to dimension 5
\footnotetext{\hspace{-0.35cm}
\endgraf
 2020 {\it Mathematics Subject Classification:} 35J61, 35B65, 35B35.
 \endgraf {\it Key words and phrases:}   semilinear equation, stable solution, H\"older regularity
\endgraf
 The first and second authors are funded by the Chinese Academy of Science and NSFC grant No. 11688101.
The third author is supported by NSFC (No. 11871088 \& No.12025102) and by the Fundamental Research Funds for the Central Universities.
\endgraf $^\ast$ Corresponding author.}
 }
\author{Fa Peng$^\ast$, Yi Ru-Ya Zhang  and Yuan Zhou}
\begin{document}

\arraycolsep=1pt
\allowdisplaybreaks
 \maketitle

\begin{center}
\begin{minipage}{13.5cm}\small
 \noindent{\bf Abstract.}\quad
Let  $2\le n\le 5$.
 We establish an apriori
 interior H\"older regularity of $C^2$-stable  solutions to  the semilinear equation
 $-\bdz u=f(u)$ in any domain of $\rn$ for  any  nonlinearity $f\in C^{0,1}(\rr) $.
 If   $f $ is nondecreasing and convex in addition,
we  obtain an interior H\"older regularity, and hence the local boundedness, of
$W^{1,2}_\loc(\Omega)$-stable solutions  by  locally approximating
them  via $C^2(\Omega)$-stable  solutions. In particular, we
do not require any lower bound on $f$.

\end{minipage}
\end{center}

\section{Introduction}
Let $n\ge 2$ and $\Omega\subset\rn $ be a bounded
 domain.
Consider the  semilinear elliptic equation
\begin{equation}\label{sta}
-\bdz u=f(u)\quad {\rm in}\ \Omega,
\end{equation}
where  the nonlinearity $f\in C^{0,1}(\rr)$, that is, $f$ is  locally Lipschitz in $\rr$.
The equation \eqref{sta} is the Euler-Lagrange equation for minimizers of the energy functional
$$\mathcal{E}(u,\Omega):=\frac12\int_{\Omega}|Du|^2\,dx-\int_{\Omega} F(u)\,dx,$$
 where $F(t)=\int^t_0f(t)\,dt$ for $t\in\rr$.
 A function $u:\Omega\to\rr$  is called as  a
$L^1_{\loc}(\Omega)$-weak solution
to the equation \eqref{sta}   if $u\in L^1_\loc(\Omega)$, $f(u)\in L^1_\loc(\Omega)$ and
\begin{equation}\label{stt00} -\int_{\Omega} u\Delta\xi\,dx=\int_{\Omega}f(u)\xi\,dx\quad \forall
\xi\in C^\fz_c(\Omega).\end{equation}
 If    $u$ is a  $W^{1,2}_\loc(\Omega)$-weak solution, that is, $u\in W^{1,2}_\loc(\Omega)$  apriori and $u$ is a $L^1_{\loc}(\Omega)$-weak solution  to  \eqref{sta} in the sense of \eqref{stt00}, then
  \eqref{stt00} reads as
\begin{equation}\label{stt0}\int_{\Omega}Du\cdot D\xi\,dx=\int_{\Omega}f(u)\xi\,dx\quad \forall
\xi\in C^\fz_c(\Omega).\end{equation}
Moreover, write $$f'_{-}(t):=\lim_{h\to 0^{+}}\frac{f(t)-f(t-h)}{h}\quad \forall t\in \rr,$$
which, when $ f\in C^1(\rn)$,  coincide with $f'(t)$.
A $L^1_{\loc}(\Omega)$-weak solution $u$ to \eqref{sta} is  called as  a stable solution   if
$f'_{-}(u)\in L^1_{\loc}(\Omega)$ and
\begin{equation}\label{stt}
\int_\Omega |D\xi|^2\,dx-\int_\Omega f'_{-}(u)\xi^2\,dx\ge 0\quad \forall \xi\in C^{0,1}_c(\Omega).
\end{equation}
  Note that  \eqref{stt} means the nonnegativity of the second variation of the energy $\mathcal{E}(u,\Omega)$  at $u$, and moreover, \eqref{stt} coincides with the nonnegativity of
 the first Dirichlet eigenvalue of the linearized operator $-\bdz -f'_{-}(u)$ in $\Omega$, for more details see \cite{d11}.

In dimension $n\le 9$,
Crandall and Rabinowitz \cite{cr75} first proved the boundedness of
 weak stable solutions   $u\in W^{1,2}_0(\Omega)$ to
 the equation \eqref{sta} with the nonlinearity $f(t)= e^t$, that is,
 the equation $-\Delta u= e^u$.   Later, Brezis and  V\'azquez \cite{bv97}  obtained an analogue result
 for  the equation \eqref{sta} with
the nonlinearity  $f(t)=(1+t)^p$ for any $p>1$, that is,    $-\Delta u= (1+u)^p$.
However, in dimension $n\ge10$, one cannot expect the boundedness of $W^{1,2}_0(\Omega)$
stable solutions  to \eqref{sta}  in general; indeed,
$-2\ln|x|\in W^{1,2}_0(B_1)\setminus L^\fz(B_1)$  is a stable solution to $-\bdz u=2(n-2)e^u$  in the unit ball $B_1$,
see \cite{bv97,b03} for details.

The results above lead to a long-standing conjecture  (see \cite{b03} and also \cite{cfrs}):
 in dimension $n\le 9$, if the nonlinearity $f$ is positive, nondecreasing, convex and superlinear at
$+\fz$, then stable solutions $u\in W^{1,2}_0(\Omega)$ to the equation \eqref{sta} are  always bounded.
This conjecture was
 also motivated by the Brezis' boundedness conjecture and Brezis-V\'azquez' $W^{1,2}$-regularity conjecture  for extremal solutions
(apriori in $L^1(\Omega)$) to the equation \eqref{sta}; for more details see \cite{b03}.
Recently, this conjecture
 was completely answered by  Cabr\'e, Figalli, Ros-Oton and Serra \cite{cfrs}; see also \cite{cc06,c10,cr13,csr16,c19,n20,v13} for earlier contributions towards this conjecture.
Indeed, if  $f$ is nonnegative, Cabr\'e, Figalli, Ros-Oton and Serra \cite{cfrs}  established
a crucial apriori interior  H\"older estimate for  $C^2$-stable solutions $u$ to \eqref{sta}:
\begin{align}\label{holder}
\underset{{B_r(x_0)}}{\rm osc}\ u\le C(n)\left(\frac{r}R\right)^{\alpha}\|Du\|_{L^2(B_{2R}(x_0))},\quad
\forall x_0\in\Omega,\, 0<r<R<\frac1 4\dist(x_0,\partial\Omega),
\end{align}
for some dimensional exponent $\alpha\in(0,1)$. Here
  $B_r(y)$ denotes the ball with center $y$ and radius
$r>0$. Moreover, they showed that   $L^{2+\gamma}(B_{r}(x_0))$-norm of $Du$  for some dimensional exponent $\gz>0$
and $C^{\alpha}(\overline {B_r(x_0)})$-norm of  $ u$   are
bounded by the $L^1(B_{2r}(x_0))$ norm of $u$.
Combining with moving plane method, they bounded $L^\fz(\Omega)$-norm of $u$ by  $L^1(\Omega)$-norm
when  $\Omega$ is convex $C^1$ domain and $u$ has zero boundary.
If $f$ is nondecreasing and convex   in addition,
they  bounded
 $L^{2+\gamma}(\Omega)$-norm of $Du$
and $C^{\alpha}(\overline {\Omega})$-norm of  $ u$ via  $L^1(\Omega)$-norm
when  $\Omega$ is of $ C^3$ and $u$ has zero boundary.
 Via an approximation argument they further bound the $L^\fz(\Omega)$-norm of all $W^{1,2}_0(\Omega)$-stable solutions via some constant depending only on $ \Omega $ and  rate $f(t)/t$,
where  they require $f(t)/t\to +\fz$  as $t \to+\fz$.
The above bound via $ L^1$-norm is very important for them
 to show that extremal solutions are indeed  $W^{1,2}_0$-weak solutions.
Note that their proofs, especially the proof of their Lemma 3.1 which is crucial to get \eqref{holder}, heavily relies on the nonnegativity of $ f$.

It is natural   to ask, in dimension $n\le 9$,  whether the nonnegativity of the nonlinearity $f$ is necessary to get the boundedness
of $W^{1,2}_0(\Omega)$-stable solutions to \eqref{sta}?
Cabr\'e \cite{c21} further asked
that  whether the nonnegativity of the nonlinearity $f$ is necessary to get the interior regularity
of   stable solutions?
It was already proved by  Cabr\'e and Capella \cite{cc06} that
the nonnegativity of   $f$ is  unnecessary to get the boundedness of
 radial stable solutions $u\in W^{1,2}_0(\Omega)$ to \eqref{sta}.
 Moreover, in dimension $n\le4$,  the nonnegativity of  $f$ is shown by Cabr\'e  \cite{c10} to be unnecessary to get the apriori boundedness regularity for
 $C^2$-stable solutions  to \eqref{sta}; but in dimension $5\le n\le 9$,
 his approach   does not work. Recently, in dimension $n\le4$,
  Cabr\'e \cite{c19} also provided a new and  simpler proof of this result in \cite{c10}. In dimension $5$, Cabr\'e \cite{c19} established the interior $L^p$ bounds of $C^2$-stable solutions with $p<+\fz$ for any $C^1$ nonlinearity $f$.

In this paper we prove that,   in dimension $n\le 5$,
 the nonnegativity of   $f$ is unnecessary to get the  following apriori
 interior H\"older regularity of $C^2(\Omega)$-stable  solutions to \eqref{sta}.

\begin{thm}\label{th-2}Let $2\le n\le 5$.
There exists $\alpha\in(0,1)$ and $C\ge1$ depending only on $n$ such that
   $C^2(\Omega)$-stable
solutions to equation \eqref{sta}  with $f\in C^{0,1}(\rr)$    enjoy  the interior H\"older $C^{0,\alpha}(\Omega)$-regularity  \eqref{holder}.
\end{thm}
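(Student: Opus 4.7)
My plan is to follow the framework of Cabré--Figalli--Ros-Oton--Serra \cite{cfrs}, who proved \eqref{holder} under the extra assumption $f \ge 0$, and to isolate and replace the steps where nonnegativity is used by an absorption argument valid only for $n \le 5$. The cornerstone of the whole approach is the Sternberg--Zumbrun geometric inequality: substituting $\xi = |Du|\eta$ into \eqref{stt} and using the identity $-\tfrac12 \Delta(|Du|^2)=|D^2u|^2+f'(u)|Du|^2$ (obtained by differentiating \eqref{sta}), one obtains
$$\int_\Omega \lf(|D^2u|^2-|D|Du||^2\r)\eta^2\,dx \le \int_\Omega |Du|^2|D\eta|^2\,dx$$
on the open set $\{|Du|>0\}$ for every Lipschitz $\eta$ compactly supported in $\Omega$. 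Crucially, this step uses only \eqref{sta} and \eqref{stt} and requires no information on the sign of $f$.

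The next step, following \cite{cfrs}, is to derive from Sternberg--Zumbrun combined with the equation a decay-type estimate of the form
$$\int_{B_r(x_0)}|Du|^2\,dx \le C\lf(\frac rR\r)^{n-1+2\alpha}\int_{B_R(x_0)}|Du|^2\,dx,\qquad 0<r<R<\tfrac14\dist(x_0,\partial\Omega).$$
Once this is established, \eqref{holder} follows from Campanato's characterization of Hölder spaces together with Morrey's embedding, exactly as in \cite{cfrs}. The delicate point is producing this decay without $f \ge 0$. In \cite{cfrs}, one chooses a test function $\eta$ built from a radial cutoff combined with $(x-x_0)\cdot Du$, and integration by parts produces a term of the form $\int f(u)\,((x-x_0)\cdot Du)\,\eta^2\,dx$; under $f \ge 0$ this term has a definite sign and can be discarded, but without any sign information it must be controlled.

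The main obstacle is therefore to absorb this extra term. My plan is to replace $f(u)$ by $-\Delta u$, integrate by parts once more to push a derivative onto the remaining factor, and then use a weighted Cauchy--Schwarz inequality to absorb the result into the Sternberg--Zumbrun left-hand side. The absorption pits an $L^2$ trace-type inequality along the level sets $\{u=t\}$ (accessed via the coarea formula and Sard's theorem on the regular set of $u$) against the quadratic Sternberg--Zumbrun left-hand side; a direct count of powers on an $(n-1)$-dimensional level set suggests that the absorption closes precisely when $n \le 5$, which matches the dimensional ceiling of Theorem \ref{th-2}. The remainder of the argument — the Campanato iteration and the Moser-type passage from the decay estimate to the pointwise $C^{0,\alpha}$-bound \eqref{holder} — is then a routine adaptation of the corresponding steps in \cite{cfrs}, since those are insensitive to the sign of $f$.
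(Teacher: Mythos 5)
Your overall skeleton (a Sternberg--Zumbrun-type inequality from stability, then a weighted/hole-filling decay of the Dirichlet energy, then a Morrey--Campanato step that is insensitive to the sign of $f$) is indeed the same skeleton the paper uses, but the step that actually carries the theorem is missing. Everything hinges on producing a concrete inequality in which the problematic radial term is absorbed with a \emph{strictly positive} coefficient for $n\le 5$, and your proposal replaces this by the assertion that ``a direct count of powers on an $(n-1)$-dimensional level set suggests that the absorption closes precisely when $n\le 5$.'' That is not an argument, and the dimensional threshold does not in fact come from a trace/power count on level sets: in the paper it comes out as the algebraic coefficient $\frac{(n-2)(6-n)}{4}$ in \eqref{ke}, which is positive exactly for $2<n<6$, after a very specific sequence of choices: take $\zeta=|x|\eta$ in \eqref{po-f} (equivalently $\xi=|Du|\,|x|\eta$ in \eqref{stt}, deliberately \emph{not} the Cabr\'e--Figalli--Ros-Oton--Serra choice $\xi=(x\cdot Du)\eta$ that your plan starts from); decompose $D^2u\,Du\cdot x$ into the level-set normal and tangential parts; control the normal part by the mean-curvature Cauchy--Schwarz inequality $(\bdz u-\bdz_\fz u/|Du|^2)^2\le (n-1)\bigl[|D^2u|^2-2|D|Du||^2+(\bdz_\fz u/|Du|^2)^2\bigr]$, which is exactly what lets the Sternberg--Zumbrun left-hand side be spent; integrate by parts the term $\int\bdz u\,(Du\cdot x)\eta^2\,dx$; and finally insert the homogeneous weight $\eta=|x|^{-\frac{n-2}{2}}\phi$ and track every constant. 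Your proposed manipulation (substitute $f(u)=-\bdz u$ into $\int f(u)(x\cdot Du)\eta^2\,dx$ and integrate by parts) just regenerates Hessian terms of the type $\int D^2u\,Du\cdot x\,\eta^2\,dx$, and controlling those with constants good enough to reach $n=5$ is precisely the content you have not supplied; note that a cruder absorption along these lines is essentially Cabr\'e's earlier argument, which stops at $n\le 4$, so the constant bookkeeping is genuinely the crux and cannot be presumed to ``close at $5$.''

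Two smaller points. First, your derivation of the Sternberg--Zumbrun inequality differentiates the equation and uses $-\tfrac12\bdz(|Du|^2)=|D^2u|^2+f'(u)|Du|^2$, which has a sign error (Bochner gives $\tfrac12\bdz(|Du|^2)=|D^2u|^2-f'(u)|Du|^2$ when $-\bdz u=f(u)$) and is anyway not directly licensed for $u\in C^2$ with $f$ merely locally Lipschitz; the paper avoids this by using the divergence identity $|D^2v|^2-(\bdz v)^2={\rm div}(D^2vDv-\bdz v\,Dv)$ plus approximation, and the inequality is then valid on all of $\Omega$, not only on $\{|Du|>0\}$. Second, the decay exponent you write, $(r/R)^{n-1+2\alpha}$, should be $n-2+2\alpha$ for the Campanato/Morrey step to give $C^{0,\alpha}$; the paper in fact concludes directly from the weighted estimate $\int_{B_r}|Du|^2|x|^{-n+2}\,dx\le C\,2^{-k\alpha}\|Du\|^2_{L^2(B_{2R})}$ obtained by hole-filling and iteration, which already encodes the Morrey decay.
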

Moreover, under   nondecrease and
convexity assumptions on $f$, we show that
the nonnegativity of   $f$  is   unnecessary to get
 the interior H\"older regularity of
$W^{1,2}_\loc(\Omega)$-stable solutions to \eqref{sta}.
For convenience, we  write
\begin{align*}
\mathcal{C}:=\{f:\rr\to \rr,\quad f\in C^{0,1}(\rr)\ {\rm is\ nondecreasing\ and\
convex}\}.
\end{align*}
We emphasis again that  the nonlinearity in the class $\mathcal C$  is not
necessarily nonnegative.

  \begin{thm}\label{th-1}
Let $2\le n\le 5$.
There exists $\alpha=\alpha(n)\in(0,1)$  and $C=C(n)\ge1$ such that
   $W^{1,2}_\loc(\Omega)$-stable
solutions to the equation \eqref{sta}  with $f\in\mathcal C $    enjoy  the   H\"older $C^{0,\alpha}(\Omega)$-regularity  \eqref{holder}
with  $0<r<R/2<r_0$ for some $r_0=r_0(f,n,x_0)>0$, and hence
are locally bounded in $\Omega$.
\end{thm}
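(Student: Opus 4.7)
The strategy is to reduce Theorem \ref{th-1} to the a priori estimate in Theorem \ref{th-2} via a local approximation: for each small ball $B_R(x_0)\subset\subset\Omega$ we construct a sequence of $C^2$-stable solutions $u_k$ of a slightly truncated version of \eqref{sta} that converges to $u$, apply \eqref{holder} to each $u_k$, and pass to the limit. Convexity and monotonicity of $f\in\mathcal C$ are the ingredients that make the approximants available (via a monotone iteration scheme) and force them to inherit the stability property.

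\textbf{Construction of approximants.} Fix $x_0\in\Omega$ and $B_{R_1}(x_0)\subset\subset\Omega$. For each $k\ge 1$, let $f_k\in\mathcal C$ be the globally Lipschitz function obtained from $f$ by keeping $f$ on $[-k,k]$ and extending it linearly outside with the one-sided slopes $f'_-(\pm k)$; then $f_k\to f$ locally uniformly, and monotonicity and convexity are preserved. By Fubini, choose $R<R_1$ with $u|_{\partial B_R(x_0)}\in H^{1/2}(\partial B_R(x_0))$, and let $u_k\in W^{1,2}(B_R(x_0))$ be the minimal solution of
$$-\bdz v=f_k(v)\ \text{in}\ B_R(x_0),\qquad v=u\ \text{on}\ \partial B_R(x_0),$$
produced by the standard monotone iteration (which uses the global Lipschitz property of $f_k$). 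Since $f_k\le f$, the function $u$ is a supersolution of the above problem, hence $u_k\le u$ in $B_R(x_0)$. Minimality of $u_k$ together with convexity of $f_k$ forces the stability inequality \eqref{stt} for $u_k$ with nonlinearity $f_k$, and elliptic Schauder theory upgrades $u_k$ to a $C^{2,\alpha}$-stable solution on $B_R(x_0)$.

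\textbf{Uniform estimate and identification of the limit.} Theorem \ref{th-2} applied to $u_k$ in $B_{R/2}(x_0)$ yields, for $0<r<R/8$,
$$\underset{B_r(x_0)}{\rm osc}\ u_k\le C(n)\Big(\frac{4r}{R}\Big)^\alpha\|Du_k\|_{L^2(B_{R/2}(x_0))}.$$
An energy comparison between $u_k$ and $u$ (using $u_k\le u$ and their common boundary trace) bounds $\|Du_k\|_{L^2(B_{R/2}(x_0))}$ uniformly in $k$ by $C\|u\|_{W^{1,2}(B_R(x_0))}$. By Arzel\`a--Ascoli a subsequence converges $u_k\to u_\infty$ uniformly on $\overline{B_{R/4}(x_0)}$ and weakly in $W^{1,2}(B_{R/2}(x_0))$, and the uniform $L^\infty$ bound coming from the H\"older estimate together with $f_k\to f$ locally uniformly allows passage to the limit, yielding $-\bdz u_\infty=f(u_\infty)$ in $B_R(x_0)$ with $u_\infty=u$ on $\partial B_R$. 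To identify $u_\infty=u$, set $w=u-u_\infty\ge 0$: convexity of $f$ gives $-\bdz w=f(u)-f(u_\infty)\le f'_-(u)w$, while stability \eqref{stt} of $u$ tested against $w$ gives $\int|Dw|^2\ge\int f'_-(u)w^2$; the two inequalities together with the equation for $w$ force $\int|Dw|^2=\int f'_-(u)w^2$, and once $R$ is chosen small enough that the linearized operator $-\bdz-f'_-(u)$ on $B_R(x_0)$ with zero Dirichlet data has trivial kernel, $w\equiv 0$. This fixes the radius $r_0=r_0(f,n,x_0)$ in the statement; the oscillation estimate then transfers from $u_k$ to $u$, yielding \eqref{holder} and, in particular, local boundedness.

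\textbf{Main obstacle.} The crucial technical step is the identification $u_\infty=u$: the minimal stable approximations could in principle converge to a strictly smaller stable solution of the limiting equation, and it is precisely the interplay of convexity, monotonicity, and stability—together with the shrinking of the base ball to trivialize the kernel of the linearized operator—that rules this out. This shrinking accounts for the quantitative dependence $r_0=r_0(f,n,x_0)$ in the conclusion (it also absorbs the requirement that the range of $u_k$ on $B_R(x_0)$ eventually lie in $[-k,k]$, which is dictated by the local behavior of $u$ near $x_0$). The remaining ingredients—construction of $u_k$, uniform energy estimates, and the passage to the limit—are by now standard once Theorem \ref{th-2} is in hand.
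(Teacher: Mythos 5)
Your overall strategy (truncate $f$, approximate $u$ locally by $C^2$-stable solutions of the truncated problems, apply Theorem \ref{th-2} to the approximants, pass to the limit) is the same as the paper's, but two steps where you wave at ``standard'' facts are exactly where the real work lies, and as written they have genuine gaps. First, the construction of $u_k$: the ``standard monotone iteration'' needs an ordered subsolution--supersolution pair, and you only provide the supersolution $u$. Without a lower barrier you have neither existence of $u_k$ on a ball of radius fixed independently of $k$ (the Lipschitz constant $f'_-(k)$ of $f_k$ blows up, so no contraction/Fredholm argument on a fixed ball), nor the notion of a \emph{minimal} solution, nor the uniform $W^{1,2}$ bound (your energy comparison needs a lower bound $f_k(u_k)\ge f'_-(1)u_k-K$ together with a uniform $L^2$ lower bound on $u_k$). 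This is precisely what the paper's Lemma \ref{ex} supplies: a solution $u^{(0)}\le u$ of the linear problem $-\bdz u^{(0)}=f'_-(1)u^{(0)}-[f'_-(1)-f(1)]$ with boundary data $u$, on a ball so small that $f'_-(1)$ lies below the first Dirichlet eigenvalue; by convexity $u^{(0)}$ is a subsolution for every $f_\ez$, and it is here (not in a kernel condition) that $r_0=r_0(f,n,x_0)$ is determined. Relatedly, your claim that ``minimality plus convexity forces stability of $u_k$'' is both unproved in this setting (the classical semi-stability of minimal solutions again presupposes the sub/supersolution framework and some smoothness of the nonlinearity) and unnecessary: the paper gets stability of $u_\ez$ directly from the stability of $u$, since $u_\ez\le u$, $(f_\ez)'_-\le f'_-$ and $(f_\ez)'_-$ is nondecreasing give $(f_\ez)'_-(u_\ez)\le f'_-(u)$ pointwise, so \eqref{stt} for $u$ implies \eqref{stt} for $u_\ez$.

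Second, the identification $u_\infty=u$. Your argument hinges on choosing $R$ so small that $-\bdz-f'_-(u)$ on $B_R(x_0)$ has trivial kernel (equivalently, strictly positive first eigenvalue), but you never justify that such an $R$ exists: the potential $f'_-(u)$ is only known to be in $L^1_{\loc}$ (indeed in the relevant case $\esssup f'_-(u)=+\fz$), and stability only gives nonnegativity of the quadratic form on all of $\Omega$; strict positivity on small balls does not follow by any soft argument for such rough potentials. The paper avoids this entirely: from equality in the stability inequality tested with $w=u-u^\star$ it deduces $f(u)-f(u^\star)=f'_-(u)(u-u^\star)$ a.e., hence by convexity $f$ is linear on $[u^\star(x),u(x)]$ a.e., and then (following \cite[Proposition 4.2]{cfrs} and \cite{df}, using the strong maximum principle) $f$ is linear on a half-line $(\essinf u^\star,+\fz)$, contradicting $\esssup_{B}f'_-(u)=+\fz$; the complementary case $\esssup_{B}f'_-(u)<+\fz$ is dispatched at the outset, since then $u$ is already $C^2$ by elliptic regularity. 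You need either to substantiate the eigenvalue claim (hard with an $L^1$ potential) or to replace that step by the linearity/contradiction argument. The remaining ingredients of your proposal (truncation of $f$, passage to the limit in the equation, transfer of \eqref{holder} from $u_\ez$ to $u$ after strong $W^{1,2}$ convergence) do match the paper's Theorem \ref{app} and its use in the proof of Theorem \ref{th-1}.
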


Obviously,   we would conclude  Theorem \ref{th-1}  from
Theorem \ref{th-2}  if    $W^{1,2}_\loc(\Omega)$-stable solutions can be approximated via
 $C^2$-stable solutions in  certain sense.
Recall that if  $ f\in \mathcal{C}$ is nonnegative, it was proved in \cite[Proposition 4.2]{cfrs} that
$W^{1,2}_0$-stable solutions to \eqref{sta} is  globally approximated in $W^{1,2}(\Omega)$ by
 $ C^2(\Omega)$-stable solution  $u^\ez$ to the equation $-\bdz v=f_\ez(v)$ in $\Omega$ for some $0\le f_\ez\in \mathcal C$.
Below we  show  that,  in any dimension,
 the nonnegativity of $f$ is unnecessary to get  the following local approximation
 to $W^{1,2}_\loc(\Omega)$-stable solutions via  $C^2$-stable solutions.

\begin{thm}\label{app} Let $n\ge2$.
Let $u$ be a $W^{1,2}_\loc(\Omega)$-stable solution  to the equation \eqref{sta}, where $f\in \mathcal C$.
For any $x_0\in \Omega$  there exists $r_0>0$ depending on $f,n$ and $x_0$ so that
we can find a family   $\{u^\ez\}_{\ez\in(0,\ez_0)}\subset C^2(B_{r_0}(x_0)) $  for some sufficiently small $\ez_0>0$   satisfying that
$u^\ez$ is a stable solution to $-\Delta v=f_\ez(v)$ in $B_{r_0}(x_0)$ for some $f_\ez\in \mathcal C$,
and that $u^\ez\to u$ in $W^{1,2}(B_{r_0}(x_0))$ as $\ez\to 0$.
\end{thm}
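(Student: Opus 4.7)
The plan is to construct each $u^\ez$ as the minimizer, on a sufficiently small ball $B_{r_0}(x_0)$, of a smoothed Dirichlet energy with boundary trace equal to $u$. The key observation is that on small enough balls the functional $\ce_\ez$ becomes strictly convex on the affine space $u+W^{1,2}_0(B_{r_0}(x_0))$, which yields both existence of a unique minimizer $u^\ez$ and $W^{1,2}$-convergence $u^\ez\to u$.

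For the approximation of the nonlinearity, first I would truncate $f$ outside a large window $[-M,M]$ by two affine pieces with slopes chosen to preserve monotonicity and convexity, producing $\widetilde f\in\mathcal C$ globally Lipschitz with constant $L_M$. Mollifying gives $f_\ez\in C^\fz(\rr)\cap\mathcal C$ with $\lip(f_\ez)\le L_M$ and $f_\ez\to\widetilde f$ locally uniformly; set $F_\ez(t):=\int_0^tf_\ez(s)\,ds$, which is convex. Now fix $r_0>0$ with $\overline{B_{2r_0}(x_0)}\subset\boz$ and so small that $L_M(C_nr_0)^2\le 1/2$, where $C_n$ is the Poincar\'e constant for $W^{1,2}_0(B_{r_0}(x_0))$. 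Minimizing
\begin{equation*}
\ce_\ez(v):=\int_{B_{r_0}(x_0)}\lf[\tfrac12|Dv|^2-F_\ez(v)\r]\,dx\quad\text{over }v\in u+W^{1,2}_0(B_{r_0}(x_0)),
\end{equation*}
coercivity comes from the convex inequality $F_\ez(v)\ge F_\ez(u)+f_\ez(u)(v-u)$ combined with $f_\ez(u)\in L^2$, and weak lower semi-continuity is standard. By the Euler-Lagrange equation and elliptic bootstrap (since $f_\ez\in C^\fz$), the minimizer $u^\ez\in C^2(B_{r_0}(x_0))$ solves $-\bdz u^\ez=f_\ez(u^\ez)$; stability is automatic from minimality of $u^\ez$.

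For the convergence, I would set $\fai^\ez:=u^\ez-u\in W^{1,2}_0(B_{r_0}(x_0))$ and exploit that $u$ weakly solves the original equation (so $\int Du\cdot D\fai^\ez=\int f(u)\fai^\ez$) to expand
\begin{equation*}
\ce_\ez(u^\ez)-\ce_\ez(u)=\tfrac12\|D\fai^\ez\|_{L^2}^2+\int[f(u)-f_\ez(u)]\fai^\ez-R^\ez,
\end{equation*}
where $R^\ez:=\int\!\int_0^1[f_\ez(u+s\fai^\ez)-f_\ez(u)]\fai^\ez\,ds\,dx$ satisfies $0\le R^\ez\le\tfrac12 L_M(C_nr_0)^2\|D\fai^\ez\|_{L^2}^2$ by the monotonicity, the Lipschitz bound on $f_\ez$, and Poincar\'e. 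Minimality $\ce_\ez(u^\ez)\le\ce_\ez(u)$ and the smallness $L_M(C_nr_0)^2\le 1/2$ then yield $\|D\fai^\ez\|_{L^2}\le C\|f(u)-f_\ez(u)\|_{L^2}$, and the right-hand side tends to $0$ as $\ez\to 0$.

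The main obstacle is the tension in choosing the truncation level $M$ and the radius $r_0$ when $f$ is only locally Lipschitz and $u$ is possibly unbounded on $B_{r_0}(x_0)$ (since $W^{1,2}\not\hookrightarrow L^\fz$ for $n\ge 3$). Shrinking $r_0$ is driven by enlarging $L_M$, while making $\|f(u)-f_\ez(u)\|_{L^2(B_{r_0}(x_0))}\to 0$ requires $M$ large enough that $\{|u|>M\}$ contributes negligibly to the discrepancy $\|\widetilde f(u)-f(u)\|_{L^2}$. A consistent choice is to pre-fix $M$ using the distribution of $|u|$ on $B_{r_0}(x_0)$, select $r_0$ from the resulting $L_M$, and then let only the mollification parameter $\ez\to 0$; the tail discrepancy outside $[-M,M]$ is then controlled by the local integrability of $f(u)$ inherited from $f(u)\in L^1_\loc$ and the convexity-monotonicity comparison $|\widetilde f(t)|\le |f(t)|+O(1)$ available for $|t|>M$.
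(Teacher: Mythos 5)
Your construction of the approximations (minimizing a truncated and mollified energy on a small ball, with stability coming for free from minimality) is sound as far as it goes, but it does not prove the theorem, because your family is indexed by the mollification parameter only, with the truncation level $M$ and hence the radius $r_0$ fixed. As $\ez\to0$ your estimate gives $\|D(u^\ez-u)\|_{L^2}\le C\|f(u)-f_\ez(u)\|_{L^2(B_{r_0}(x_0))}$, and this right-hand side tends to $\|f(u)-\widetilde f(u)\|_{L^2(B_{r_0}(x_0))}$, not to $0$: it is a fixed positive quantity whenever $|u|>M$ on a set of positive measure, which is exactly the interesting case ($u$ unbounded and $\esssup_{B_{r_0}(x_0)}f'_-(u)=+\fz$; otherwise $u\in C^2$ by elliptic regularity and there is nothing to approximate). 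So your $u^\ez$ converge to the solution of the truncated problem, not to $u$. You cannot repair this by letting $M\to\fz$ along with $\ez$, because your choice of $r_0$ is tied to the Lipschitz constant $L_M\ge f'_-(M)\to+\fz$ through the smallness condition $L_M(C_nr_0)^2\le 1/2$, so the ball would shrink to a point, whereas the statement requires one fixed ball $B_{r_0}(x_0)$ on which all the $u^\ez$ live and converge. There are also two integrability gaps: $f(u)$ is only assumed to lie in $L^1_\loc$, so $\|f(u)-f_\ez(u)\|_{L^2}$ need not be finite, and using $\varphi^\ez=u^\ez-u\in W^{1,2}_0(B_{r_0}(x_0))$ as a test function in the weak equation for $u$ (which is formulated against $C^\fz_c$ test functions) is not justified when $f(u)\varphi^\ez$ is not known to be integrable.

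The paper's proof is organized precisely to avoid this tension. Its radius $r_0$ is chosen from the Sobolev constant and the single number $f'_-(1)$, not from any Lipschitz constant of a truncation; the nonlinearity is truncated at the level $1/\ez\to\fz$; and the approximations are produced by a monotone iteration squeezed between the solution $u^{(0)}$ of the linear problem $-\bdz u^{(0)}=f'_-(1)u^{(0)}-[f'_-(1)-f(1)]$ (Lemma 3.1) and $u$ itself, the one-sided bound $f_\ez(t)\ge f'_-(1)t-[f'_-(1)-f(1)]$ replacing any global Lipschitz control. Crucially, the convergence $u_\ez\to u$ there is not a quantitative energy estimate: the monotone limit $u^\star\le u$ solves the same Dirichlet problem as $u$, and the stability of $u$ together with the convexity of $f$ forces $u^\star=u$ (otherwise $f$ would be linear on an infinite interval, contradicting $\esssup f'_-(u)=+\fz$). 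Some argument of this kind, ruling out that the approximations select a different solution below $u$ with the same boundary data, is missing from your proposal and is the heart of the matter.
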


We prove Theorem \ref{th-1} as below.

\begin{proof}[Proof of Theorem \ref{th-1}]
 Let $u$ be a $W^{1,2}_\loc(\Omega)$-stable solution  to the equation \eqref{sta}, where $f\in \mathcal C$.
For any $x_0\in \Omega$, let $r_0 >0$   and $ u_\ez\in C^2(B_{r_0}(x_0))$ be as in Theorem \ref{app}.
Applying Theorem \ref{th-2} to $u_\ez$, one has
\begin{align}\label{holder1}{\rm osc}_{B_r(x_0)}u_{\ez}\le C(n )\left(\frac{r}{r_0}\right)^{\az}\|Du_{\ez}\|_{L^2(B_{r_0}(x_0))},\quad \forall r<r_0/2.
\end{align}
By Theorem \ref{app}, $u_{\ez}\to u$ in $W^{1,2}(B_{r_0}(x_0))$ as $\ez\to 0$, one  concludes that
 \eqref{holder1} also holds for $u$.  We complete this proof.
\end{proof}

The proof of Theorem \ref{th-2} and Theorem \ref{app} is presented in Section 2 and Section 3 respectively. Below we sketch some ideas of the proofs.

To prove  Theorem \ref{th-2}, it suffices to built up the following crucial inequality for $C^2$-stable solutions $u$ to   \eqref{sta}:
\begin{align}\label{m-1}
\int_{B_r}|x|^{-n+2}|Du|^2\,dx\le C(n)\int_{B_{2r}\backslash B_{r}}|Du|^2|x|^{-n+2}\,dx,
\end{align}
that is, Lemma 2.1 in Section 2. We write $B_r=B_r(0)$  and assume $0\in\Omega$ for simple.
Then a hole-filling together with an iteration argument
 leads to Theorem \ref{th-2}, see Section 2 for details.

In order to prove \eqref{m-1} or Lemma \ref{key-es},  we choose test function
$ \xi=|Du||x|\eta$ in \eqref{stt}. Equivalently,  we
choose test functions $\zeta=|x|\eta$ for any $\eta\in C^{0,1}_c(\Omega)$ in
 the reversed
Poincar\'e type formula in Lemma \ref{lpo-f}, which was proved by  \cite{sz98} (see also \cite{cfrs,sz}), so to obtain
  Lemma \ref{le-1}.
Our test functions are different from   test functions $\xi=(x\cdot Du)\eta$
  chosen in \eqref{stt} by \cite[Lemma 2.1]{cfrs}, but it is similar to test functions $|Du||x|^{-\bz}$ with some $\bz>0$ chosen by Cabr\'e \cite[Theorem 1.5]{c19}.
Then we have to bound all  terms including Hessian (or second order derivatives) in Lemma \ref{le-1}. To this end,
 we decompose $D^2u Du$ along the normal direction and the tangential space of  the level set $\{u=c\} $ for any $c$, that is,
$$\frac{D^2uDu}{|Du|}
 = \frac{\Delta_\fz u}{|Du|^3}  Du +
[\frac{D^2u Du}{|Du|}-\frac{\bdz_\fz u}{|Du|^3}Du]\quad{\rm on}\quad \{Du\neq 0\};$$
where $\Delta_\fz u=D^2uDu\cdot Du$  is the $\infty$-Laplacian. In order to bound $ \frac{\Delta_\fz u}{|Du|^3}  Du$, we further write
$$\frac{\Delta_\fz u}{|Du|^3}  Du= \left(\frac{\Delta_\fz u}{|Du|^2 } -\Delta u\right) \frac{Du}{|Du|} +\Delta  u \frac{Du}{|Du|}\quad{\rm on}\quad \{Du\neq0\}.$$
Then an upper bound for $(\frac{\Delta_\fz u}{|Du|^3 } -\frac{\Delta u}{|Du|}) $,
the mean curvature of $\{u=c\}$ up
to multiple $(n-1)$, was  derived in Lemma \ref{l-geo} by using the Cauchy-Schwartz inequality.
Via integration by parts we also have an  upper bound for the integration of $\int_\Omega \Delta  u  (Du\cdot x)\eta^2\,dx$ as in lemma \ref{le-3}.
Combining all of them we could get an estimate without Hessian terms; see Lemma \ref{le-4}.
 Taking $\eta =|x|^{-\frac{n-2}2}\phi$ we prove  Lemma \ref{key-es} via a direct calculation, that is, \eqref{m-1}.

To prove Theorem 1.3, we first built up  in Lemma 3.1 the existence of $C^2$-solution to the equation
$-\Delta v =f'_{-}(1)v-[f'_{-}(1)-f(1)]$ in  balls  $B_{r_0}(x_0)$ with given Dirichlet boundary $u$,
where  the range of   $r_0$ is determined by the Sobolev inequality and $\dist(x_0,\partial\Omega)$.  Thanks to Lemma 3.1, we are able to adapt the arguments for
\cite[Proposition 4.2]{cfrs} to
 approximate
 locally the stable solution via  $C^2$-stable solutions.
Details of the proof of Theorem 1.3  are given in Section 3 for reader's convenience.

Finally, we list several remarks.
\begin{rem}\rm

(i)
 The $W^{1,2}_\loc(\Omega)$-stable solutions  in
Theorem \ref{th-1} can not be relaxed to some $L^1_\loc(\Omega)$-stable solutions.
Indeed, in dimension $ 3\le n\le 9$,
  consider the equation  $-\bdz u=\lambda_s(1+u)^p$ in $B_1$, where
$$ \frac{n}{n-2}<p\le \frac{n+2\sqrt {n-1}}{n-4+2\sqrt {n-1}},\quad
\lambda_s=\frac{2}{p-1}\left(n-\frac{2p}{p-1}\right)>0,$$
The function $ |x|^{-\frac{2}{p-1}}-1\in L^1(B_1)  $
satisfies \eqref{stt00} and \eqref{stt}. But
$ |x|^{-\frac{2}{p-1}}-1\notin W^{1,2}_\loc (B_1)$
and
$ |x|^{-\frac{2}{p-1}}-1\notin  L^{\fz}_\loc (B_1)  $, see
 Brezis and V\'azquez \cite{bv97} for more details.

(ii)   In Theorem \ref{th-2} and also Theorem \ref{th-1}, it is not clear to us whether the $L^2 (B_r)$-norm  of $Du$ can be bounded by  $L^1(B_{2r})$ norm of $u$.
Recall that in the case  $f\ge0$,  this is proved in \cite{cfrs}  and
is crucial  to get   $W^{1,2}$-regularity of extremal
 solutions.

(iii) In dimension $n\le 5$, it remains open whether the nonnegativity of $f$ is unnecessary to  get
the  global  aproiori H\"older regularity of $C^2(\Omega)$-stable solutions with zero boundary, global approximation to $W^{1,2}_0(\Omega)$-stable solutions via $C^2(\Omega)$-stable solutions
  and also global  H\"older regularity of $W^{1,2}_0(\Omega)$-stable solution.
\end{rem}

\section{Proof of Theorem \ref{th-2}}

To prove Theorem \ref{th-2} it suffices to prove the following key lemma. In  this section, we always let   $u\in C^2(\Omega)$ be any stable solution   to the equation \eqref{sta}, where $f\in C^{0,1}(\rr)$.

\begin{lem}\label{key-es} Assume that $0\in\Omega$.
We have
\begin{align}\label{ke}
&\frac{(n-2)(6-n)}{4}\int_{\Omega}|Du|^2|x|^{-n+2}\phi^2\,dx
+(n-3)\int_{\Omega}(Du\cdot x)^2|x|^{-n}\phi^2\,dx\nonumber\\
&\quad\quad\le \int_{\Omega} |Du|^2|x|^{-n+4}|D\phi|^2\,dx
-n\int_{\Omega}|Du|^2|x|^{-n+2}(x\cdot D\phi) \phi\,dx\nonumber\\
&\quad\quad\quad +4\int_{\Omega}(x\cdot Du)(Du\cdot D\phi) \phi|x|^{-n+2}\,dx \quad\forall \phi\in C^\fz_c(\Omega).
\end{align}
\end{lem}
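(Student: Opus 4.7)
The plan is to derive (\ref{ke}) by feeding the reversed Poincar\'e identity of Sternberg--Zumbrun a carefully chosen test function, reducing the resulting inequality to a Hessian-free estimate via a level-set geometry decomposition, and finally specializing the test function so as to separate an $|x|$-weight from an arbitrary cutoff $\phi$.

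First, I would apply the Sternberg--Zumbrun formula with $\zeta(x) = |x|\eta(x)$, $\eta \in C^{0,1}_c(\Omega)$; this is the ``Lemma le-1'' mentioned in the overview. The factor $|x|$ inside the test function is what lets an $|x|^2$-weight propagate through the second-variation inequality. Expanding $|D(|x|\eta)|^2$ and the associated cross terms produces pieces involving $|Du|^2|x|^2|D\eta|^2$, $|Du|^2\eta^2$, $(x\cdot Du)(Du\cdot D\eta)\eta$, plus geometric terms carrying $D^2 u\,Du$ that must be absorbed.

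Next, on the open set $\{Du\ne 0\}$ I would eliminate the Hessian contributions via the two-step decomposition
$$\frac{D^2 u\,Du}{|Du|} = \Bigl(\frac{\Delta_\infty u}{|Du|^2}-\Delta u\Bigr)\frac{Du}{|Du|} + \Delta u\,\frac{Du}{|Du|} + \Bigl[\frac{D^2 u\,Du}{|Du|}-\frac{\Delta_\infty u}{|Du|^3}Du\Bigr],$$
with $\Delta_\infty u = D^2u\,Du\cdot Du$. The bracketed summand is tangential to $\{u=c\}$, while $\frac{\Delta u}{|Du|}-\frac{\Delta_\infty u}{|Du|^3}$ equals $(n-1)$ times the mean curvature of the level sets; both are controlled by Cauchy--Schwarz, which is the content of the geometric Lemma l-geo flagged in the overview. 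The pure Laplacian piece $\Delta u\,\frac{Du}{|Du|}$ is paired against $x\eta^2$ and handled by integration by parts using $-\Delta u = f(u)$ (Lemma le-3). Combining these steps absorbs every occurrence of $D^2 u$ in Lemma le-1 and yields the Hessian-free estimate Lemma le-4.

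Finally, I would substitute $\eta = |x|^{-(n-2)/2}\phi$ for $\phi\in C^\infty_c(\Omega)$ into Lemma le-4. A direct differentiation gives
$$D\eta = -\tfrac{n-2}{2}\,|x|^{-n/2}\,\phi\,\tfrac{x}{|x|} + |x|^{-(n-2)/2}D\phi,$$
so that $|x|^2|D\eta|^2$ decomposes into a diagonal piece $\frac{(n-2)^2}{4}|x|^{-n+2}\phi^2$ together with cross terms and $|D\phi|^2$; collecting all powers of $|x|$ throughout Lemma le-4 and simplifying produces the claimed coefficients $\frac{(n-2)(6-n)}{4}$ and $(n-3)$ on the left-hand side. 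The main obstacle I expect is the precise bookkeeping of numerical constants in this final substitution: the cancellation producing $\frac{(n-2)(6-n)}{4}$, whose positivity for $2\le n\le 5$ is exactly what will drive Theorem~\ref{th-2} through a hole-filling iteration, is sharp, so any mistracking of constants in the Hessian decomposition or the integration-by-parts step would destroy the estimate. By contrast, once the test function $\zeta=|x|\eta$ has been identified, the stability input and the level-set decomposition are routine.
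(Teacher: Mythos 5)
Your plan reproduces the paper's proof essentially step for step: the Sternberg--Zumbrun reversed Poincar\'e inequality tested with $\zeta=|x|\eta$, the normal/tangential decomposition of $D^2u\,Du/|Du|$ together with the Cauchy--Schwarz mean-curvature bound on the level sets, the separate integration-by-parts treatment of the $\Delta u\,(Du\cdot x)\eta^2$ term, and the final substitution $\eta=|x|^{-(n-2)/2}\phi$ with the bookkeeping that yields the coefficients $\frac{(n-2)(6-n)}{4}$ and $(n-3)$, so the approach is correct and the same as the paper's. The only minor inaccuracy is that in the paper the equation $-\Delta u=f(u)$ enters solely through the proof of the reversed Poincar\'e inequality (combined with stability), while the $\Delta u\,(Du\cdot x)\eta^2$ term in Lemma \ref{le-3} is handled by pure integration by parts rather than by substituting $f(u)$ for $-\Delta u$.
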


\begin{proof}[Proof of Theorem \ref{th-2}]
We may assume $3\le n\le 5$. Indeed, in dimension 2,
one may add  an additional artificial variable similarly to  \cite[Section 1.4]{cfrs} and   \cite[Remark 1.7]{cms}.
Let $u\in C^2(\Omega)$ be a stable solution to \eqref{sta}.
 Below we only need to show that
 \eqref{holder} holds
 for some $\alpha\in(0,1)$ and constant $C$ depending only on $n$.
 Let $x_0\in \Omega$ and $0<r<R\le \frac14\dist(x_0,\partial\Omega)$.
Up to  a translation argument, we may  $x_0=0$. Since $3\le n\le 5$, we have the coefficient $\frac{(n-2)(6-n)}{4}>0$ in \eqref{ke}.
Applying Young's inequality, from \eqref{ke} we deduce that
$$
  \int_{\Omega}|Du|^2|x|^{-n+2}\phi^2\,dx \le C\int_{\Omega} |Du|^2|x|^{-n+4}|D\phi|^2\,dx  \quad\forall \phi\in C^\fz_c(\Omega) .
$$
Given any $k\ge-1$,   taking the cut-off function
$\phi\in C^\fz_c(B_{2^{-k}R})$ satisfying
\begin{align*}
\phi=1\quad{\rm in}\ B_{2^{-k-1}R},\quad 0\le\phi\le 1\quad
{\rm in}\ B_{2^{-k}R},\quad |D\phi|\le \frac 4{2^{-k}R}
\quad{\rm in}\ B_{2^{-k}R},
\end{align*}
  we get
\begin{align}\label{th1-1}
\int_{B_{2^{-k-1}R}}|Du|^2|x|^{-n+2}\,dx
\le C(n)\int_{B_{2^{-k}R}\backslash B_{2^{-k-1}R}}|Du|^2|x|^{-n+2}\,dx.
\end{align}
Adding both sides by $C(n)\int_{B_{2^{-k-1}R}}|Du|^2|x|^{-n+2}\,dx$ one has
\begin{align*}
\int_{B_{2^{-k-1}R}}|Du|^2|x|^{-n+2}\,dx
\le \frac{C(n)}{C(n)+1}\int_{B_{2^{-k}R}}|Du|^2|x|^{-n+2}\,dx\quad \forall k\ge0.
\end{align*}
By iteration, we further have
\begin{align*}
\int_{B_{2^{-k-1}R}}|Du|^2|x|^{-n+2}\,dx
\le \left[\frac{C(n)}{C(n)+1}\right]^k\int_{B_{ R}}|Du|^2|x|^{-n+2}\,dx
\quad \forall k\ge0.
\end{align*}
Writing $\alpha=\log_2 \frac{C(n)}{C(n)+1}$, and applying \eqref{th1-1} with $k=-1$ we further have
\begin{align*}
\int_{B_{2^{-k-1}R}}|Du|^2|x|^{-n+2}\,dx \le
  C(n)2^{k\alpha} \int_{B_{ 2R}}|Du|^2 \,dx
\quad \forall k\ge0.
\end{align*}
Letting $k\ge 1$ such that $2^{-k-2}R\le r\le 2^{-k-1}R$, from this we conclude \eqref{holder} as desired.
\end{proof}

The rest of this section is devoted to the proof of Lemma \ref{key-es}, which consists of a sequence of lemmas.
We begin with   the following important reversed Poincar\'e type formula
due to Sternberg and Zumbrun \cite{sz98,sz}; see also Cabr\'e et al \cite[Lemma 2.3]{cfrs}
for an alternative proof which will be stated here
for the convenience of reader.

\begin{lem}\label{lpo-f}
 We have
\begin{align}\label{po-f}
\int_\Omega [|D^2u|^2-|D|Du||^2]\zeta^2\le \int_\Omega |Du|^2|D\zeta|^2\,dx\quad\forall\zeta\in C^{0,1}_c(\Omega).
\end{align}
\end{lem}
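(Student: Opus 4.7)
The plan is to test the stability inequality \eqref{stt} with the Lipschitz function $\xi=|Du|\zeta$ and to match the resulting left-hand side with a Bochner-type identity coming from $-\Delta u=f(u)$; the cross terms produced on the two sides will cancel exactly and leave \eqref{po-f}.

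First I would check admissibility of the test function. Since $u\in C^2(\Omega)$, on $\{Du\neq 0\}$ one has the classical identity $D|Du|=(D^2u)Du/|Du|$, giving $|D|Du||\le |D^2u|$, while on the critical set Stampacchia's theorem yields $D|Du|=0$ almost everywhere, so $|Du|$ is locally Lipschitz on $\Omega$ and $\xi=|Du|\zeta\in C^{0,1}_c(\Omega)$ is admissible in \eqref{stt}. Expanding $|D\xi|^2=\zeta^2|D|Du||^2+\zeta\,D|Du|^2\cdot D\zeta+|Du|^2|D\zeta|^2$ produces
\begin{align*}
\int_\Omega f'_-(u)|Du|^2\zeta^2\,dx\le \int_\Omega\Bigl[\zeta^2|D|Du||^2+\zeta\,D|Du|^2\cdot D\zeta+|Du|^2|D\zeta|^2\Bigr]dx.
\end{align*}

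Next I would invoke the Bochner identity. With $f\in C^{0,1}(\rr)$ and $u\in C^2(\Omega)$ satisfying $-\Delta u=f(u)$, standard elliptic regularity for a locally Lipschitz right-hand side gives $u\in W^{3,p}_\loc(\Omega)$ for every $p<\infty$, so almost everywhere in $\Omega$
\begin{align*}
\tfrac{1}{2}\Delta|Du|^2=|D^2u|^2+Du\cdot D(\Delta u)=|D^2u|^2-f'(u)|Du|^2,
\end{align*}
where the second equality uses the chain rule for Lipschitz compositions and the fact that $f'(u)=f'_-(u)$ a.e. Multiplying by $\zeta^2$ and integrating by parts once yields
\begin{align*}
\int_\Omega f'_-(u)|Du|^2\zeta^2\,dx=\int_\Omega|D^2u|^2\zeta^2\,dx+\int_\Omega\zeta\,D|Du|^2\cdot D\zeta\,dx.
\end{align*}

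Substituting this identity into the stability estimate from the second paragraph, the two $\int_\Omega\zeta\,D|Du|^2\cdot D\zeta\,dx$ terms cancel and one is left with \eqref{po-f}. The main obstacle is justifying the Bochner identity and the associated integration by parts without assuming $u\in C^3$, and in particular handling the critical set $\{Du=0\}$; the $W^{3,p}_\loc$ regularity above makes every pointwise identity rigorous in the a.e.\ sense, and, if one prefers to avoid invoking this regularity theory, one may first approximate $f$ by smooth $f_k\to f$ for which the corresponding solutions $u_k$ are $C^3$, execute the argument classically, and then pass to the limit.
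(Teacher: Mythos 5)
Your proof is correct, and its skeleton is the same as the paper's (the Sternberg--Zumbrun computation): both arguments test the stability inequality \eqref{stt} with $\xi=|Du|\zeta$ and then convert $\int_\Omega f'_-(u)|Du|^2\zeta^2\,dx$ into Hessian terms by exploiting the equation. Where you differ is the identity used for that conversion: you differentiate the equation and invoke the Bochner formula $\tfrac12\Delta|Du|^2=|D^2u|^2+Du\cdot D(\Delta u)$, which forces you to justify third derivatives via $u\in W^{3,p}_{\loc}$ (legitimate, since $f(u)$ is locally Lipschitz and Calder\'on--Zygmund theory applies), whereas the paper stays strictly at the second-order level by using the divergence identity $|D^2v|^2-(\Delta v)^2={\rm div}(D^2v\,Dv-\Delta v\,Dv)$ (note ${\rm div}(D^2u\,Du)=\tfrac12\Delta|Du|^2$, so your identity is this one before subtracting ${\rm div}(\Delta u\,Du)$, the subtraction being exactly what cancels the third derivatives) and then integrates $\int(\Delta u)^2\zeta^2$ by parts using $-\Delta u=f(u)$; that route needs only $u\in C^2$ plus mollification and the chain rule for $D(f(u))$, so it is slightly more economical in regularity. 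Two small caveats on your write-up: the assertion ``$f'(u)=f'_-(u)$ a.e.'' can fail on a positive-measure portion of $\{Du=0\}$ where $u$ sits at a corner of $f$, but this is harmless because the quantity is multiplied by $|Du|^2$ (worth saying explicitly); and your fallback suggestion of approximating $f$ by smooth $f_k$ with ``corresponding solutions $u_k$'' is not straightforward --- producing nearby solutions of the perturbed equation converging to the given $u$, and preserving stability, is exactly the kind of delicate approximation the paper only achieves in Section 3 under extra hypotheses --- so the $W^{3,p}_{\loc}$ justification should be regarded as the actual proof, not an optional convenience.
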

\begin{proof}
Recall the following divergence structure
$$|D^2v|^2-(\bdz v)^2={\rm div}(
D^2v Dv - \bdz v  Dv)\quad\forall v\in C^3.$$
By an approximation argument we have
\begin{align*}
\int_\Omega [|D^2u|^2-(\bdz u)^2]\zeta^2\,dx=2\int_\Omega [\bdz u (Du\cdot D\zeta)
-D^2u Du\cdot D\zeta ]\zeta\,dx\quad \forall\zeta\in C^{0,1}_c(\Omega).
\end{align*}
Since $-\bdz u=f(u)$,  via integration by parts we obtain
\begin{align*}
\int_\Omega (\bdz u)^2\zeta^2\,dx&=-\int_\Omega (\bdz u) f(u)\zeta^2\,dx\nonumber\\
&=\int_\Omega f'_{-}(u)|Du|^2\zeta^2\,dx+2\int_\Omega f(u)(Du\cdot D\zeta)\zeta\,dx\nonumber\\
&=\int_\Omega f'_{-}(u)|Du|^2\zeta^2\,dx-2\int_\Omega \bdz u(Du\cdot D\zeta)\zeta\,dx.
\end{align*}
By the definition of the stable solution,   one has
\begin{align*}
 \int_\Omega f'_{-}(u)|Du|^2\zeta^2\,dx&\le
\int_\Omega |D(|Du|\zeta)|^2\,dx\nonumber\\
&=\int_\Omega |D|Du||^2\zeta^2\,dx+\int_\Omega |Du|^2|D\zeta|^2\,dx
+2\int_\Omega (D^2u Du\cdot D\zeta)\zeta\,dx.
\end{align*}
Combing all these estimates together we conclude \eqref{po-f}.
\end{proof}

Taking $\zeta=|x|\eta$ in \eqref{po-f}  with
  $\eta\in C^{0,1}_c(\Omega)$ we obtain the following.

\begin{lem}\label{le-1}
 We have
  \begin{align}\label{d1-1}
 (n-1)\int_\Omega |Du|^2\eta^2\,dx
 &\le -\int_\Omega [|D^2u|^2-|D|Du||^2]|x|^2\eta^2\,dx-2\int_\Omega (D^2u Du\cdot x)\eta^2\,dx\nonumber \\
&\quad+\int_\Omega |Du|^2|D\eta|^2|x|^2\,dx
\quad\forall \eta\in C^{0,1}_c(\Omega).
 \end{align}
\end{lem}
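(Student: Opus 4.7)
The plan is to substitute the Lipschitz test function $\zeta=|x|\eta$ into the reverse Poincar\'e inequality \eqref{po-f}, and then reabsorb the resulting cross term via an elementary divergence identity to manufacture the factor $n-1$. First, for $x\ne 0$ one computes
\begin{align*}
D\zeta=\frac{x}{|x|}\eta+|x|D\eta,\qquad |D\zeta|^2=\eta^2+2\eta(x\cdot D\eta)+|x|^2|D\eta|^2,
\end{align*}
and $\zeta^2=|x|^2\eta^2$. Since $|x|$ is globally Lipschitz with $|D|x||=1$ almost everywhere, the product $\zeta=|x|\eta$ lies in $C^{0,1}_c(\Omega)$ and is therefore an admissible test function. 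Plugging the identities above into \eqref{po-f} gives
\begin{align*}
\int_\Omega[|D^2u|^2-|D|Du||^2]|x|^2\eta^2\,dx \le \int_\Omega|Du|^2\eta^2\,dx+2\int_\Omega|Du|^2\eta(x\cdot D\eta)\,dx+\int_\Omega|Du|^2|x|^2|D\eta|^2\,dx.
\end{align*}

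Next, I would dispose of the cross term $2\int_\Omega|Du|^2\eta(x\cdot D\eta)\,dx$ by invoking the pointwise identity
\begin{align*}
{\rm div}(|Du|^2 x)=n|Du|^2+2D^2uDu\cdot x,
\end{align*}
tested against the compactly supported weight $\eta^2$. Integration by parts yields
\begin{align*}
n\int_\Omega|Du|^2\eta^2\,dx+2\int_\Omega(D^2uDu\cdot x)\eta^2\,dx+2\int_\Omega|Du|^2\eta(x\cdot D\eta)\,dx=0.
\end{align*}
Solving this identity for $2\int_\Omega|Du|^2\eta(x\cdot D\eta)\,dx$ and inserting the result into the inequality of the preceding paragraph produces \eqref{d1-1} after a one-line rearrangement.

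The step is essentially bookkeeping, with no genuine analytic obstacle. The only point requiring a brief remark is the admissibility of the test function at the origin, which is handled by the Lipschitz regularity of $|x|$. The substantive feature of the choice $\zeta=|x|\eta$ is that the zeroth-order term $\eta^2$ arising from $|D|x||^2=1$ is precisely what the above divergence identity converts into the coercive term $(n-1)\int_\Omega|Du|^2\eta^2\,dx$ on the left-hand side of \eqref{d1-1}; this is exactly the quantity that will feed into the later arguments leading to Lemma \ref{key-es}, where one will ultimately specialize to $\eta=|x|^{-(n-2)/2}\phi$.
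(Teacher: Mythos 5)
Your proposal is correct and follows essentially the same route as the paper: substitute $\zeta=|x|\eta$ into \eqref{po-f}, expand $|D(|x|\eta)|^2=\eta^2+2\eta(x\cdot D\eta)+|x|^2|D\eta|^2$, and eliminate the cross term by integrating ${\rm div}(|Du|^2x)=n|Du|^2+2D^2uDu\cdot x$ against $\eta^2$, which is exactly the paper's identity \eqref{d1-1x}. The rearrangement then yields \eqref{d1-1} as in the paper.
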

\begin{proof}
 Choose $\zeta=|x|\eta$ with $\eta\in C^{0,1}_c(\Omega)$. Clearly,
$\zeta\in C^{0,1}_c(\Omega)$.
By \eqref{po-f} we have
$$
 \int_\Omega [|D^2u|^2-|D|Du||^2]|x|^2\eta^2\,dx\le \int_\Omega |Du|^2|D[|x|\eta]|^2\,dx.
$$
A direct computation gives
\begin{align*}
  \int_\Omega |Du|^2|D(|x|\eta)|^2\,dx
&=\int_\Omega |Du|^2|D\eta|^2|x|^2\,dx+\int_\Omega |Du|^2\eta^2\,dx
+2\int_\Omega |Du|^2 (D\eta \cdot x) \eta\,dx.
\end{align*}
By integration by parts   we have
\begin{align}\label{d1-1x}
2\int_\Omega |Du|^2 (D\eta \cdot x) \eta\,dx&=-\int_\Omega \eta^2{\rm div}(|Du|^2x)\,dx\nonumber\\
&=-n\int_\Omega |Du|^2\eta^2\,dx-2\int_\Omega (D^2u Du\cdot x)\eta^2\,dx.
\end{align}
Thus  \eqref{d1-1} holds.
\end{proof}

 To bound the second order derivatives of $u$ in \eqref{d1-1}, we decompose $D |Du|=\frac{D^2uDu}{|Du|}$ as the summation of
its projection  on the normal direction  of the level set  $\{u=c\}$ whenever $Du\ne 0$,
and also   its projection    on the tangential space   of  $\{u=c\}$.
That is, when $Du\ne0$,
\begin{align}\label{d1-2}\frac{D^2uDu}{|Du|}&= \frac{D^2uDu}{|Du|}\cdot\frac{Du}{|Du|}\frac{Du}{|Du|}+ \left(\frac{D^2uDu}{|Du|}-\frac{D^2uDu}{|Du|}\cdot \frac{Du}{|Du|} \frac{Du}{|Du|}\right) \nonumber\\
&= \frac{\Delta_\fz u}{|Du|^3}  Du +
\left(\frac{D^2u Du}{|Du|}-\frac{\bdz_\fz u}{|Du|^3}Du\right)\quad {\rm on}\ \{Du\neq 0\},\end{align}
  where and below we write $\Delta_\fz u=D^2uDu\cdot Du$.
  The following bound follows from \eqref{d1-2}.

\begin{lem}\label{le-2}
One has
\begin{align}\label{d1-3}
 -2D^2uDu\cdot x
&\le  \frac{1}{n-1} \Big(\bdz u-\frac{\bdz_\fz u}{|Du|^2}\Big)^2 |x|^2
+ \left[|D|Du||^2-\left(
\frac{\bdz_\fz u}{|Du|^2}\right)^2\right]|x|^{2} \nonumber\\
&\quad- 2  \bdz u (Du\cdot x)
 + (n-1) (Du\cdot x)^2 |x|^{-2}   + |Du|^2\quad  a.\,e.\ \mbox{in $\Omega$}.
\end{align}

\end{lem}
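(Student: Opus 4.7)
The plan is to prove the inequality pointwise on the open set $\{Du\neq 0\}$, where the orthogonal decomposition \eqref{d1-2} is available, and then to dispose of $\{Du=0\}$ by the standard Stampacchia-type observation that $D^2u\,Du=0$ almost everywhere on that set. On $\{Du\neq 0\}$ I would first write $D^2u\,Du=|Du|\,D|Du|$ and substitute \eqref{d1-2}, which splits $D|Du|$ into its normal component $(\bdz_\fz u/|Du|^3)Du$ along $Du$ and a tangential component $w$ satisfying $w\perp Du$ and $|w|^2=|D|Du||^2-(\bdz_\fz u/|Du|^2)^2$. Pairing with $-2x$ yields
\[
-2D^2u\,Du\cdot x=-2\frac{\bdz_\fz u}{|Du|^2}(Du\cdot x)-2|Du|(w\cdot x),
\]
cleanly isolating a scalar (normal) contribution from a vectorial (tangential) one.

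Next, to match the $-2\bdz u(Du\cdot x)$ term in \eqref{d1-3}, I would further split the scalar part as $-2\bdz u(Du\cdot x)+2(\bdz u-\bdz_\fz u/|Du|^2)(Du\cdot x)$, and estimate the two remaining cross terms by Young's inequality with weights forced by the target coefficients. For the scalar cross term, pairing $A=(\bdz u-\bdz_\fz u/|Du|^2)|x|$ with $B=(Du\cdot x)/|x|$ and using $2AB\le\frac{1}{n-1}A^2+(n-1)B^2$ produces the $\frac{1}{n-1}(\bdz u-\bdz_\fz u/|Du|^2)^2|x|^2$ and $(n-1)(Du\cdot x)^2|x|^{-2}$ summands of \eqref{d1-3}; the dimensional factor $(n-1)$ reflects the $(n-1)$-dimensional tangent space of the level sets of $u$. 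For the tangential contribution, Cauchy--Schwarz followed by the symmetric Young inequality $2|Du|\,|w|\,|x|\le|Du|^2+|w|^2|x|^2$ yields the remaining $|Du|^2$ and $[|D|Du||^2-(\bdz_\fz u/|Du|^2)^2]|x|^2$ terms. Summing the two estimates concludes the pointwise bound on $\{Du\neq 0\}$.

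On $\{Du=0\}$ one has $D^2u\,Du=0$ almost everywhere, so the left-hand side of \eqref{d1-3} vanishes; interpreting $\bdz_\fz u/|Du|^2$ by the convention $0/0=0$, the right-hand side reduces to the nonnegative expression $\frac{1}{n-1}(\bdz u)^2|x|^2+|D|Du||^2|x|^2+|Du|^2$, and the inequality is automatic. The entire argument is essentially bookkeeping once the geometric splitting \eqref{d1-2} is adopted; the one step that must be got right is the asymmetric Young weight $\frac{1}{n-1}$ versus $(n-1)$, which is forced by the dimension of the tangent space and delivers exactly the coefficients appearing in \eqref{d1-3}. I do not anticipate any real obstacle beyond matching these coefficients.
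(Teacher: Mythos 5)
Your proposal is correct and follows essentially the same route as the paper: the same decomposition \eqref{d1-2} of $D|Du|$ into its normal and tangential parts, the same splitting off of $-2\bdz u(Du\cdot x)$, the same asymmetric Young weights $\tfrac1{n-1}$ and $(n-1)$ for the normal cross term and the symmetric Young inequality for the tangential one, together with the identity $|D|Du|-\tfrac{\bdz_\fz u}{|Du|^3}Du|^2=|D|Du||^2-(\tfrac{\bdz_\fz u}{|Du|^2})^2$. The treatment of $\{Du=0\}$ via $D^2u=0$ a.e.\ on that set also matches the paper's argument.
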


\begin{proof}
Observe that, since $u\in C^2$, we know that $D^2u=0$ almost everywhere in $\{Du=0\}$;
 see \cite[Theorem 1.56]{t87}.  So \eqref{d1-3} holds almost everywhere in $\{Du=0\}$.
Below we assume that $Du\ne 0$. Considering \eqref{d1-2} and by $D|Du|=\frac{D^2uDu}{|Du|}$ we write
\begin{align*}
2[\bdz u (Du\cdot x)-D^2uDu\cdot x ]
&=2\left(\bdz u\frac{Du\cdot x}{|Du|} - D|Du|\cdot x \right)|Du| \nonumber\\
&=  2\left( {\bdz u} - \frac{\Delta_\fz u}{|Du|^2}\right) (Du\cdot x) - 2 \left( D|Du|- \frac{\Delta_\fz u}{|Du|^3} Du \right)\cdot x  |Du|.
\end{align*}
By Young's inequality  we get
\begin{align*}
2\left(\bdz u - \frac{\Delta_\fz u}{|Du|^2}\right) (Du\cdot x)
&\le \frac{1}{n-1} \Big(\bdz u-\frac{\bdz_\fz u}{|Du|^2}\Big)^2 |x|^2
+(n-1) (Du\cdot x)^2 |x|^{-2}
\end{align*}
and
$$-
2\left( D|Du|- \frac{\Delta_\fz u}{|Du|^3} Du \right)  \cdot x |Du|\le
\left| D|Du|- \frac{\Delta_\fz u}{|Du|^3} Du \right|^2|x|^2+  |Du|^2.$$
Since  $D|Du|=\frac{D^2u Du}{|Du|}$,  one has \begin{align*}
 \left| D|Du|-\frac{\bdz_\fz u}{|Du|^3}Du\right|^2
&= \left|\frac{D^2u Du}{|Du|}\right|^2-2\frac{D^2u Du\cdot Du}{|Du|} \frac{\bdz_\fz u}{|Du|^3}  +\left|\frac{\bdz_\fz u}{|Du|^3}Du\right|^2\\
&=  |D|Du||^2- \left(\frac{\bdz_\fz u}{|Du|^2}\right)^2.
\end{align*}
Combining all together we get \eqref{d1-3}.
\end{proof}

Observing that
$$
-\frac1{n-1}\left(\frac{\bdz u}{|Du|}-\frac{\bdz_\fz u}{|Du|^3}\right)
=-\frac1{n-1}{\rm div}\left(\frac{Du}{|Du|}\right)$$ is the mean curvature of the level set $\{u=c\}$,
one has the following upper bound.

\begin{lem}\label{l-geo} One has
\begin{align}\label{geo}
\left(\bdz u-\frac{\bdz_\fz u}{|Du|^2}\right)^2
\le (n-1)
\left[|D^2u|^2-2|D|Du||^2+\left(\frac{\bdz_\fz u}{|Du|^2}\right)^2\right]\quad {\rm a.e.\
in}\quad \Omega.
\end{align}
\end{lem}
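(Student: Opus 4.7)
The plan is to reduce \eqref{geo} to a pointwise identity plus Cauchy--Schwarz after choosing an orthonormal frame adapted to the level set of $u$. As already used in the proof of Lemma \ref{le-2}, $D^2u=0$ almost everywhere on $\{Du=0\}$, so on that set both sides of \eqref{geo} vanish (interpreting $\bdz_\fz u/|Du|^2$ as $0$ there) and the inequality is trivial. Thus it suffices to verify \eqref{geo} pointwise at an arbitrary $x_0\in\Omega$ with $Du(x_0)\ne0$.

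At such a point, fix an orthonormal basis $\{e_1,\ldots,e_{n-1},e_n\}$ of $\rn$ with $e_n:=Du(x_0)/|Du(x_0)|$, and write $u_i:=Du\cdot e_i$ and $u_{ij}:=e_i\cdot D^2u\,e_j$ evaluated at $x_0$. Using $D|Du|=D^2u\,Du/|Du|$, one reads off
\begin{align*}
|Du|=u_n,\qquad \frac{\bdz_\fz u}{|Du|^2}=u_{nn},\qquad |D|Du||^2=\sum_{i=1}^n u_{in}^2,\qquad \bdz u-\frac{\bdz_\fz u}{|Du|^2}=\sum_{i=1}^{n-1}u_{ii}.
\end{align*}
Expanding $|D^2u|^2=\sum_{i,j<n}u_{ij}^2+2\sum_{i<n}u_{in}^2+u_{nn}^2$ and subtracting twice $|D|Du||^2=\sum_{i<n}u_{in}^2+u_{nn}^2$ and adding $u_{nn}^2$ yields the key identity
\begin{align*}
|D^2u|^2-2|D|Du||^2+\left(\frac{\bdz_\fz u}{|Du|^2}\right)^2=\sum_{i,j=1}^{n-1}u_{ij}^2.
\end{align*}

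To conclude, apply the Cauchy--Schwarz inequality twice:
\begin{align*}
\left(\sum_{i=1}^{n-1}u_{ii}\right)^2\le(n-1)\sum_{i=1}^{n-1}u_{ii}^2\le(n-1)\sum_{i,j=1}^{n-1}u_{ij}^2,
\end{align*}
and combine with the identity above to obtain \eqref{geo}. Geometrically this is transparent: $\sum_{i<n}u_{ii}$ is the trace of the tangential block of $D^2u$ along the level set $\{u=u(x_0)\}$ (equal to $|Du|\,\mathrm{div}(Du/|Du|)$, i.e.\ $(n-1)|Du|$ times the mean curvature), while $\sum_{i,j<n}u_{ij}^2$ is the squared Frobenius norm of that block, so \eqref{geo} is just the Cauchy--Schwarz bound of the trace by the Frobenius norm for an $(n-1)\times(n-1)$ symmetric matrix. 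The only real computation is the algebraic identity displayed above, which is immediate, so I do not anticipate any obstacle.
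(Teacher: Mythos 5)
Your proof is correct, but it is not the route the paper takes for this lemma: the paper works geometrically, writing $\bdz u/|Du|-\bdz_\fz u/|Du|^3=\mathrm{div}(Du/|Du|)=-(n-1)H_u$, applying Cauchy--Schwarz to the sum of the principal curvatures $\lambda_1,\dots,\lambda_{n-1}$ of the level set, and then invoking the geometric identity $|Du|^2\sum_i\lambda_i^2=|D^2u|^2-2|D|Du||^2+(\bdz_\fz u/|Du|^2)^2$ cited from Sternberg--Zumbrun, together with the mean-curvature formula from Castorina--Sanch\'on. What you do instead is prove that identity yourself by a frame computation: choosing $e_n=Du/|Du|$, you identify the right-hand bracket as the squared Frobenius norm of the tangential $(n-1)\times(n-1)$ block of $D^2u$ and the left-hand quantity as its trace, so the inequality is just trace-versus-Frobenius Cauchy--Schwarz for a symmetric matrix. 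This is essentially the ``matrix language'' argument the paper itself sketches in the remark immediately following the lemma, described there as ``more direct but less geometric.'' Your version has the advantage of being self-contained (no citation of the geometric identity or of the mean-curvature divergence formula is needed), at the cost of losing the interpretation of the quantity $\sum_i\lambda_i^2$ as curvature data of the level sets; both handle the degenerate set $\{Du=0\}$ identically, via $D^2u=0$ a.e.\ there. The frame computation itself, the treatment of $|D|Du||^2=\sum_i u_{in}^2$, and the two applications of Cauchy--Schwarz are all accurate, so there is no gap.
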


\begin{proof}
Recall that  $u\in C^2(\Omega)$ implies $D^2u=0$ a.e. in $\{Du=0\}$; see \cite[Theorem 1.56]{t87}. Then  \eqref{geo} always holds
a.e. in $\{Du=0\}$. Next we only consider the case of $\{Du\neq 0\}$.
Now we may assume that $Du\neq 0$. The mean curvature $H_u$ of
level set $\{u=c\}$  satisfies
$$-(n-1)H_u={\rm div}\left(\frac{Du}{|Du|}\right),$$
for  detail see \cite[Proposition 3.2]{cs}.
Let $\lambda_1,...,\lambda_{n-1}$ be
principal curvatures of the level set of $u$. By the definition of mean curvature $H_u$,
$$H_u=\frac{\sum_{i=1}^{n-1}\lambda_i}{n-1}.$$
By Cauchy-Schwartz inequality one has
$$\left({\rm div}\left(\frac{Du}{|Du|}\right)\right)^2=\left(\sum_{i=1}^{n-1}\lambda_i\right)^2\le (n-1)\sum_{i=1}^{n-1}(\lambda_i)^2.$$
We multiply both sides by $|Du|^2$ and apply
$${\rm div}\left(\frac{Du}{|Du|}\right)=\frac{\bdz u}{|Du|}-\frac{\bdz_\fz u}{|Du|^3}$$
to conclude
$$\left(\bdz u-\frac{\bdz_\fz u}{|Du|^2}\right)^2\le (n-1)|Du|^2\sum_{i=1}^{n-1}(\lambda_i)^2.$$
Recall the following geometric identity
$$|Du|^2\sum_{i=1}^{n-1}(\lambda_i)^2=|D^2u|^2-2|D|Du||^2+
\left(\frac{\bdz_\fz u}{|Du|^2}\right)^2,$$
see \cite[Lemma 2.1]{sz}. We get the desired inequality \eqref{geo}.
\end{proof}

\begin{rem}\rm
In terms of matrix language, the proof of above inequality is much more direct but less geometric information involved.
Given any $n\times n$ symmetric matrix $M$, denote by $M_{n-1}$  the  $n-1$ order principal sub-matrix of $M$.
Then
$$|M|^2=|M_{n-1}|^2+2|Me_n|^2-|\langle Me_n,e_n\rangle|^2.  $$
Since
$$|M_{n-1}|^2\ge \frac{({\rm tr}M_{n-1})^2}{n-1}= \frac{({\rm tr}M -\langle Me_n,e_n\rangle)^2}{n-1}.$$
Thus
$$({\rm tr}M -\langle Me_n,e_n\rangle)^2 \le (n-1) [|M|^2-2|Me_n|^2+
|\langle Me_n,e_n\rangle|^2 ].$$
Up to some rotation, this also holds with $e_n$   replaced by any unit vector $\xi$.
Applying this to $M=D^2u$ and $\xi=\frac{Du}{|Du|}$ one gets \eqref{geo} when $Du\ne0$.
\end{rem}

\begin{lem}\label{le-3}
 For all $\eta\in C^{0,1}_c(\Omega)$, we have
 \begin{align} \label{d1.xx}
& -2\int_\Omega \bdz u (Du\cdot x)\eta^2\,dx\nonumber\\
&\quad= (2-n)\int_\Omega  |Du|^2\eta^2\,dx-2\int_\Omega |Du|^2 (D\eta \cdot x) \eta\,dx +4\int_\Omega   (Du\cdot x)(Du\cdot D\eta)\eta \,dx.
\end{align}

\end{lem}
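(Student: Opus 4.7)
The plan is to view the identity as a pure integration-by-parts computation, using only the structure $\Delta u = \sum_i u_{ii}$ and the already-recorded identity \eqref{d1-1x}.

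First I would rewrite
$$-2\int_\Omega \bdz u\,(Du\cdot x)\eta^2\,dx = -2\sum_i \int_\Omega u_{ii}\bigl(\textstyle\sum_j u_j x_j\bigr)\eta^2\,dx$$
and integrate by parts once in each $x_i$, moving the $\partial_i$ from $u_{ii}$ onto the remaining factor. Using $\partial_i(u_j x_j) = u_{ij} x_j + u_j\delta_{ij}$ and the product rule on $\eta^2$, the right-hand side becomes
$$2\int_\Omega (D^2u\,Du\cdot x)\eta^2\,dx + 2\int_\Omega |Du|^2\eta^2\,dx + 4\int_\Omega (Du\cdot x)(Du\cdot D\eta)\eta\,dx.$$
This takes care of two of the three terms appearing on the right-hand side of \eqref{d1.xx}.

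Next I would eliminate the Hessian term by appealing to identity \eqref{d1-1x}, which was established inside the proof of Lemma \ref{le-1} and reads
$$2\int_\Omega (D^2u\,Du\cdot x)\eta^2\,dx = -n\int_\Omega |Du|^2\eta^2\,dx - 2\int_\Omega |Du|^2(D\eta\cdot x)\eta\,dx.$$
(If one prefers, this can be rederived on the spot by expanding $\mathrm{div}(|Du|^2 x) = n|Du|^2 + 2 D^2u\,Du\cdot x$ and integrating against $\eta^2$.) Substituting into the previous display, the $|Du|^2\eta^2$ contributions combine as $(2-n)\int_\Omega |Du|^2\eta^2$, and the $|Du|^2(D\eta\cdot x)\eta$ contribution enters with coefficient $-2$, while the term $4\int_\Omega (Du\cdot x)(Du\cdot D\eta)\eta\,dx$ is untouched; this is exactly \eqref{d1.xx}.

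There is no real obstacle here: the step is mechanical, and the only care required is to track signs and the product rule in the first integration by parts, and to recognize that the unwanted Hessian term is precisely the one that \eqref{d1-1x} dispatches. The compactly supported Lipschitz regularity of $\eta$ and the $C^2$ regularity of $u$ fully justify the two integrations by parts used.
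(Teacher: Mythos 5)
Your proof is correct and is essentially the paper's own argument: both integrate by parts once to convert $-2\int_\Omega \Delta u\,(Du\cdot x)\eta^2\,dx$ into $2\int_\Omega (D^2u\,Du\cdot x)\eta^2\,dx + 2\int_\Omega |Du|^2\eta^2\,dx + 4\int_\Omega (Du\cdot x)(Du\cdot D\eta)\eta\,dx$, and then eliminate the Hessian term via the identity \eqref{d1-1x} from the proof of Lemma \ref{le-1}. No gaps; the sign and coefficient bookkeeping matches \eqref{d1.xx}.
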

\begin{proof}
By integration by parts, we have
\begin{align*}
 -2\int_\Omega \bdz u (Du\cdot x)\eta^2\,dx&= 2\int_\Omega  Du\cdot D[(Du\cdot x)\eta^2]
\,dx\nonumber\\
&= 2\int_\Omega  D^2uDu\cdot x  \eta^2\,dx + 2\int_\Omega  |Du|^2\eta^2\,dx\nonumber\\
&\quad +4\int_\Omega   (Du\cdot x)(Du\cdot D\eta)\eta \,dx.
\end{align*}
Since \eqref{d1-1x} gives
\begin{align}
 2\int_\Omega (D^2u Du\cdot x)\eta^2\,dx= -2\int_\Omega |Du|^2 (D\eta \cdot x) \eta\,dx -
 n\int_\Omega |Du|^2\eta^2\,dx, \nonumber
\end{align}
one has \eqref{d1.xx}.
\end{proof}

\begin{lem}\label{le-4}We have
 \begin{align}\label{d1-5}
 2 ( n-2)\int_\Omega |Du|^2\eta^2\,dx
 &\le  (n-1)\int_\Omega   (Du\cdot x)^2 |x|^{-2}    \eta^2\,dx   \nonumber \\
&\quad+\int_\Omega |Du|^2|D\eta|^2|x|^2\,dx-2\int_\Omega |Du|^2 (D\eta \cdot x) \eta\,dx\nonumber\\
&\quad +4\int_\Omega   (Du\cdot x)(Du\cdot D\eta)\eta \,dx
\quad\forall \eta\in C^{0,1}_c(\Omega).
 \end{align}
\end{lem}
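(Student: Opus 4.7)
The plan is to assemble Lemma \ref{le-4} by combining Lemmas \ref{le-1}, \ref{le-2}, \ref{l-geo}, and \ref{le-3} in sequence, with the key feature being that all second-order derivative terms cancel exactly.

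First I would begin with the inequality \eqref{d1-1} of Lemma \ref{le-1}, which supplies the pointwise term $-2(D^2uDu\cdot x)\eta^2$ and the Hessian-difference term $-[|D^2u|^2-|D|Du||^2]|x|^2\eta^2$ on the right-hand side. To eliminate the first of these, multiply \eqref{d1-3} of Lemma \ref{le-2} by $\eta^2$ and integrate over $\Omega$; this produces the troublesome $\frac{1}{n-1}(\Delta u - \Delta_\infty u/|Du|^2)^2|x|^2\eta^2$ term, which I will handle immediately via Lemma \ref{l-geo}. After integrating \eqref{geo} against $|x|^2\eta^2$ and adding the remaining non-negative/cancelling pieces from Lemma \ref{le-2}, the $(\Delta_\infty u/|Du|^2)^2|x|^2\eta^2$ contributions cancel and one obtains
\begin{align*}
-2\int_\Omega D^2uDu\cdot x\,\eta^2\,dx
&\le \int_\Omega[|D^2u|^2-|D|Du||^2]|x|^2\eta^2\,dx
-2\int_\Omega\Delta u(Du\cdot x)\eta^2\,dx\\
&\quad + (n-1)\int_\Omega (Du\cdot x)^2|x|^{-2}\eta^2\,dx
+\int_\Omega |Du|^2\eta^2\,dx.
\end{align*}

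Next I would plug this estimate into \eqref{d1-1}. The critical observation — and the reason the whole calculation works — is that the $\int_\Omega[|D^2u|^2-|D|Du||^2]|x|^2\eta^2\,dx$ term appearing here exactly cancels the corresponding term from Lemma \ref{le-1}, leaving an inequality free of all second-order derivatives of $u$:
\begin{align*}
(n-1)\int_\Omega |Du|^2\eta^2\,dx
&\le -2\int_\Omega \Delta u(Du\cdot x)\eta^2\,dx
+(n-1)\int_\Omega(Du\cdot x)^2|x|^{-2}\eta^2\,dx\\
&\quad+\int_\Omega |Du|^2\eta^2\,dx +\int_\Omega|Du|^2|D\eta|^2|x|^2\,dx.
\end{align*}

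Finally I would invoke Lemma \ref{le-3} to rewrite $-2\int_\Omega\Delta u(Du\cdot x)\eta^2\,dx$ as $(2-n)\int_\Omega|Du|^2\eta^2\,dx-2\int_\Omega|Du|^2(D\eta\cdot x)\eta\,dx+4\int_\Omega(Du\cdot x)(Du\cdot D\eta)\eta\,dx$, and move the resulting $|Du|^2\eta^2$ contributions to the left. Arithmetic gives the coefficient $(n-1)-(2-n)-1=2(n-2)$, yielding \eqref{d1-5}. The main obstacle is really just bookkeeping: one must track that exactly the right multiple of $|D|Du||^2|x|^2\eta^2$ is produced by Lemma \ref{le-2} to match Lemma \ref{l-geo}, and likewise that the $(\Delta_\infty u/|Du|^2)^2|x|^2\eta^2$ contributions cancel — conceptually nothing new is required beyond carefully adding the four lemmas.
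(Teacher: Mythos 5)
Your proposal is correct and follows essentially the same route as the paper: combine Lemma \ref{le-2} with Lemma \ref{l-geo} so that the $\left(\frac{\Delta_\infty u}{|Du|^2}\right)^2$ terms cancel and the Hessian term $[|D^2u|^2-|D|Du||^2]|x|^2$ cancels against the one in \eqref{d1-1}, then apply Lemma \ref{le-3} and collect the $|Du|^2\eta^2$ coefficients to get $2(n-2)$. The only cosmetic difference is that the paper performs the combination pointwise before integrating, while you integrate each lemma first; the bookkeeping is identical.
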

\begin{proof}

From \eqref{d1-3} and \eqref{geo}, we deduce
\begin{align*}
&-2D^2uDu\cdot x   \nonumber\\
&\le \left[|D^2u|^2-2|D|Du||^2+\left(\frac{\bdz_\fz u}{|Du|^2}\right)^2\right]|x|^2
+ \left[|D|Du||^2-\left(
\frac{\bdz_\fz u}{|Du|^2}\right)^2\right]|x|^{2} \\
&\quad -2  \bdz u (Du\cdot x)+ (n-1) (Du\cdot x)^2 |x|^{-2}   + |Du|^2  \\
&=  [|D^2u|^2- |D|Du||^2]|x|^2-2  \bdz u (Du\cdot x)
 + (n-1) (Du\cdot x)^2 |x|^{-2}   + |Du|^2 .
\end{align*}
Plugging this into \eqref{d1-1}, noting \eqref{d1.xx} we conclude
\eqref{d1-5}.
\end{proof}

We utilize this lemma to  prove Lemma \ref{key-es} as below.

\begin{proof}[Proof of Lemma \ref{key-es}]
Since $u\in C^2(\Omega)$ and $0\in\Omega$, up to some approximation argument
(see \cite[Lemma 2.1]{cfrs}),
\eqref{d1-5} also holds for $\eta=|x|^{-\frac{n-2}{2}}\phi$ with $\phi\in C^\fz_c(\Omega)$.
Then applying \eqref{d1-5} with $\eta=|x|^{-\frac{n-2}{2}}\phi$, we have
 \begin{align}\label{d1-6}
& 2( n-2)\int_\Omega |Du|^2|x|^{-n+2}\phi^2\,dx\nonumber\\
 &\le  (n-1)\int_\Omega   (Du\cdot x)^2    |x|^{-n }\phi^2\,dx \nonumber \\
&\quad+\int_\Omega |Du|^2|D[|x|^{-\frac{n-2}2}\phi ]|^2|x|^2\,dx-2\int_\Omega |Du|^2 (D[|x|^{-\frac{n-2}2}\phi]\cdot x)|x|^{-\frac{n-2}2}\phi\,dx\nonumber\\
&\quad +4\int_\Omega   (Du\cdot x)(Du\cdot D[|x|^{-\frac{n-2}2}\phi])|x|^{-\frac{n-2}2}\phi \,dx   \nonumber\\
&:=J_1+J_2+J_3+J_4.
 \end{align}
Note that
$$
D[|x|^{-\frac{n-2}{2}}\phi]=-\frac{n-2}{2}|x|^{-\frac{n-2}{2}-2}x\phi+|x|^{-\frac{n-2}{2}}D\phi.
$$
 A direct calculation  yields
\begin{align*}
J_1+J_4&=[n-1-2(n-2) ]\int_\Omega  (x\cdot Du)^2|x|^{-n}\phi^2\,dx +4\int_\Omega (x\cdot Du)(Du\cdot D\phi) \phi|x|^{-n+2}\,dx,
\end{align*}
\begin{align*}
J_2&=\frac{(n-2)^2}{4}\int_\Omega  |Du|^2|x|^{-n+2}\phi^2\,dx
+\int_\Omega  |Du|^2|x|^{-n+4}|D\phi|^2\,dx\nonumber\\
&\quad-(n-2)\int_\Omega |Du|^2|x|^{-n+2}(x\cdot D\phi) \phi\,dx
\end{align*}
and
\begin{align*}
 J_3&= (n-2)\int_\Omega  |Du|^2|x|^{-n+2}\phi^2 \,dx-
2\int_\Omega |Du|^2(x\cdot D\phi) \phi|x|^{-n+2}\,dx.
\end{align*}

We move the term
$$[n-1-2(n-2) ]\int_\Omega  (x\cdot Du)^2|x|^{-n}\phi^2\,dx= -(n-3)\int_\Omega  (x\cdot Du)^2|x|^{-n}\phi^2\,dx$$
in $J_1+J_4$ to the left-hand side of \eqref{d1-6}, and   also move the terms
$$\frac{(n-2)^2}{4}\int_\Omega  |Du|^2|x|^{-n+2}\phi^2\,dx\quad\mbox{and}\quad (n-2)\int_\Omega  |Du|^2|x|^{-n+2}\phi^2 \,dx$$
in $J_2$ and $J_3$ to the left-hand side \eqref{d1-6} so to get
$$[2(n-2)- (n-2)-\frac{(n-2)^2}{4}]\int_\Omega  |Du|^2|x|^{-n+2}\phi^2 \,dx= \frac{(n-2)(6-n)}4 \int_\Omega  |Du|^2|x|^{-n+2}\phi^2 \,dx.$$
Then we conclude  \eqref{ke} from \eqref{d1-6} and $J_1+J_4$, $J_2$ and $J_3$.
 \end{proof}

\section{Proof of Theorem \ref{app}}

Given any $r>0$, denote by $\Sigma(r)$  the spectrum of Dirichlet  Laplacian in $B_{r}$, that is,
the collection of all $\lz>0$ so that one can find $0\ne v\in W^{1,2}_0(B_{r})$ satisfying
$
-\bdz v=\lz v
$ in $B_{r } $.
Write $\lz_1$ as the first eigenvalue, that is, $\lz_1>0$ is the minimal element
of $\Sigma (r)$.
Recall that Sobolev inequality
$$\int_{B_r}|v|^2\,dx\le C_0(n)r^2\int_{B_r}|Dv|^2\,dx\quad\forall v\in W^{1,2}_0(B_r)$$
for some constant   $ C_0(n)\ge 1$.  One has $
\lz_1\ge [C_0(n){r}^{2}]^{-1}.
$

Given any  $f\in \mathcal{C}$,  the nondecreasing property implies   $f'_-\ge 0$ in $\rr$.
Choose $r_\ast=r_\ast(f,n)>0$  such that
\begin{equation}\label{rfn} [C_0(n){r_\ast}^{ 2}]^{-1}>8[1+f'_-(1)].
\end{equation}
We have $f'_-(1)\notin \Sigma (r)$ for all $0<r<r_\ast$.

\begin{lem}\label{ex}
Let $u\in W^{1,2}_\loc(\Omega)$ be a solution to the equation
$-\bdz u=f(u)$ in $\Omega$, where $f\in \mathcal{C}$. For any point $x_0\in \Omega$,
there exists $0<r_0<\min\{r_\ast,\frac14\dist(x_0,\partial\Omega)\}$   such that
$u^{(0)}\in C^2(B(x_0,r_0))\cap W^{1,2}(B(x_0,r_0))$ is a unique solution  to
\begin{align}\label{inp-0}
\left\{
\begin{aligned}
-\bdz u^{(0)}&=f'_-(1)u^{(0)}-[f'_-(1)-f(1)]\quad&{\rm in}\ B(x_0,r_0),\\
u^{(0)}&=u\quad&{\rm on}\ \partial B(x_0,r_0),
\end{aligned}
\right.
\end{align}
and moreover,
\begin{align}\label{cp}
u^{(0)}\le u\quad{\rm a.e.\ in}\quad B(x_0,r_0).
\end{align}

\end{lem}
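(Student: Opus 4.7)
The plan is to construct $u^{(0)}$ as the unique variational solution of \eqref{inp-0} via Lax--Milgram on a ball small enough that $f'_{-}(1)$ sits strictly below the first Dirichlet eigenvalue of $-\Delta$, upgrade it to $C^2$ by elliptic bootstrap, and then read off the comparison $u^{(0)}\le u$ from the convexity of $f$ by testing the equation satisfied by $w:=u^{(0)}-u$ against $w^+$. The smallness condition $r_0<r_\ast$ encoded in \eqref{rfn} plays a double role: it gives coercivity for existence, and simultaneously produces a Poincar\'e factor strictly less than $1$ that closes the comparison.

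First I would fix $r_0\in(0,\min\{r_\ast,\frac14\dist(x_0,\partial\Omega)\})$. Since $u\in W^{1,2}_\loc(\Omega)$ and $\overline{B(x_0,r_0)}\subset\Omega$, the trace of $u$ on $\partial B(x_0,r_0)$ is a well-defined element of $H^{1/2}(\partial B(x_0,r_0))$. Writing $c:=f'_{-}(1)-f(1)$ and $u^{(0)}=u+w$ with $w\in W^{1,2}_0(B(x_0,r_0))$, the problem \eqref{inp-0} reduces to finding $w$ such that
\[
\int Dw\cdot D\xi\,dx - f'_{-}(1)\int w\xi\,dx = \int\bigl[f'_{-}(1)u-c\bigr]\xi\,dx - \int Du\cdot D\xi\,dx
\]
for all $\xi\in W^{1,2}_0(B(x_0,r_0))$. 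Condition \eqref{rfn} together with the Poincar\'e inequality yields $f'_{-}(1)<[C_0(n)r_0^2]^{-1}\le\lambda_1(B(x_0,r_0))$, so the bilinear form on the left is bounded and coercive on $W^{1,2}_0(B(x_0,r_0))$. Lax--Milgram then produces a unique $w$, hence a unique $u^{(0)}\in W^{1,2}(B(x_0,r_0))$. A standard elliptic bootstrap applied to $-\Delta u^{(0)}=f'_{-}(1)u^{(0)}-c$, whose right-hand side depends affinely on $u^{(0)}$, upgrades $u^{(0)}$ to $C^{\infty}(B(x_0,r_0))$, in particular to $C^2(B(x_0,r_0))$.

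For the comparison \eqref{cp}, set $w:=u^{(0)}-u\in W^{1,2}_0(B(x_0,r_0))$, so that $w^+\in W^{1,2}_0(B(x_0,r_0))$ is an admissible test function. Subtracting the weak formulations of $-\Delta u=f(u)$ and $-\Delta u^{(0)}=f'_{-}(1)u^{(0)}-c$ and testing the difference against $\xi=w^+$ yields
\[
\int |Dw^+|^2\,dx = \int_{\{w>0\}}\bigl[f'_{-}(1)u^{(0)}-c-f(u)\bigr]\,w^+\,dx.
\]
Since $f$ is convex and $f'_{-}(1)$ belongs to its subdifferential at $1$, the tangent-line inequality $f(t)\ge f(1)+f'_{-}(1)(t-1)$ holds for every $t\in\rr$, which rewrites as $f'_{-}(1)u-c-f(u)\le 0$. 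Hence the integrand above is bounded by $f'_{-}(1)w\cdot w^+ = f'_{-}(1)(w^+)^2$, and Poincar\'e combined with \eqref{rfn} give
\[
\int |Dw^+|^2\,dx \le f'_{-}(1)\int(w^+)^2\,dx \le f'_{-}(1)C_0(n)r_0^2\int |Dw^+|^2\,dx,
\]
with $f'_{-}(1)C_0(n)r_0^2<1$. Therefore $w^+\equiv 0$, i.e., $u^{(0)}\le u$ a.e.\ in $B(x_0,r_0)$.

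The main obstacle is the comparison step, where the convexity defect $f(u)-f(1)-f'_{-}(1)(u-1)\ge 0$ must cooperate with the strict inequality $f'_{-}(1)C_0(n)r_0^2<1$ imposed by \eqref{rfn}. Testing with $w^+$ rather than $w^-$ is what makes the convexity bound enter with the correct sign so that the Poincar\'e-absorption argument actually closes; once this quantitative choice of $r_0$ is fixed, existence and $C^2$ regularity are standard.
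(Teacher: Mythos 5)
Your proof is correct, and the comparison half is essentially the paper's own argument: you test the difference of the two equations against the positive part of $u^{(0)}-u$, use the tangent-line (subgradient) inequality $f(t)\ge f(1)+f'_-(1)(t-1)$ coming from convexity, and absorb via the Poincar\'e inequality using the smallness $f'_-(1)C_0(n)r_0^2<1/8$ from \eqref{rfn}; the paper's function $w^-=-\min\{0,-w^0\}$ is exactly your $w^+=(u^{(0)}-u)^+$. Where you genuinely diverge is the existence and uniqueness step: the paper, worried that $-\Delta u=f(u)$ only gives an $L^1$ right-hand side, mollifies $u$, solves each regularized problem by the Fredholm alternative (using $f'_-(1)\notin\Sigma(r_0)$), proves uniform $W^{1,2}_0$ bounds, passes to a weak limit, and then runs a separate Fredholm uniqueness argument; you instead observe that the variational formulation for $w=u^{(0)}-u\in W^{1,2}_0$ only involves $Du\in L^2$ and the affine term $f'_-(1)u-c\in L^2$, never $f(u)$ itself, so Lax--Milgram with the coercive form (coercivity again from \eqref{rfn}, since $f'_-(1)$ lies strictly below the first Dirichlet eigenvalue) gives existence and uniqueness in one stroke. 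This is simpler and avoids the paper's mollification entirely; the paper's Fredholm route would in principle work under the weaker hypothesis that $f'_-(1)$ merely avoids the spectrum, but since $r_0<r_\ast$ already forces $f'_-(1)$ below the first eigenvalue, nothing is lost. One small point you share with the paper and could make explicit: when you ``subtract the weak formulations and test against $w^+$'', the pairing $\int f(u)\,w^+\,dx$ is not a priori finite because $f(u)$ is only in $L^1_{\rm loc}$ and $w^+$ need not be bounded; the clean fix is to prove the one-sided inequality $\int Du\cdot Dw^+\,dx\ge\int\bigl[f'_-(1)u-c\bigr]w^+\,dx$ by first testing with nonnegative compactly supported Lipschitz functions, using $f(u)\ge f'_-(1)u-c$ there, and then passing to $w^+$ by density, since both sides of that inequality are continuous in $W^{1,2}_0$.
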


 \begin{proof}[Proof of Lemma \ref{ex}]
Without loss of generality, we  assume that $x_0=0$.
 Since $r_0<r_\ast$,  $f'_-(1)\notin \Sigma (r_0)$.
For simple we write
\begin{align}\label{ef}
A=f'_-(1)\ge0\quad{\rm and}\quad K=f'_-(1)-f(1).
\end{align}
To get $ u^{(0)} $ required by Lemma \ref{ex}
it suffices to  prove the existence, uniqueness and nonpositivity  of solutions $ w^0$ to \begin{align}\label{inp1-1}
-\bdz w   -Aw   =\bdz u +Au -K\quad{\rm in}\ B_{r_0};
w  =0\quad{\rm on}\ \partial B_{r_0}.
\end{align}
Indeed,  $w^0$ is the unique weak solution to \eqref{inp1-1} if and only if $u^{0}=w^0 +u$
is the unique weak
solution of problem \eqref{inp-0}, and moreover, \eqref{cp} is equivalent to $w^0\le0$.
We also note that by the standard elliptic theory (see \cite{gt98}), $u^{(0)} \in C^2(B_{r_0})$.

First we show the existence of weak solutions to \eqref{inp1-1}.
Since $-\bdz u=f(u)$ only
  belongs to $L^1(B_{r_0})$, we cannot use   the Fredhlom alternative theorem directly.
Instead, we consider the smooth mollification   $u^\dz:=u\ast \eta_{\dz}$ in $B_{2r_0}$, where $\dz\in(0,r_0/2]$  and
 $\eta_{\dz}$ is a standard
mollifier.   Since $A\notin \Sigma (r_0)$ and $ \bdz u^\dz\in L^2(B_{r_0})$,
by Fredhlom alternative theorem \cite[Chapter 6, Theorem 5]{e98},
there exists a unique
solution $w^{\dz}\in W^{1,2}_0(B_{r_0})$  such that
\begin{align}\label{inp1}
-\bdz w^\dz -Aw^\dz =\bdz u^\dz+Au^\delta-K\quad{\rm in}\ B_{r_0};
w^\dz =0\quad{\rm on}\ \partial B_{r_0}.
\end{align}
We claim that  $w^{\dz}\in W^{1,2}_0(B_{r_0})$ uniformly in $\dz\in(0,r_0/2]$.
Assume that this claim holds for the moment. By the weak compactness,  $w^\dz$ converges to some function
$w^0\in W^{1,2}_0(B_{r_0})$  weakly in $W^{1,2}(B_{r_0})$ and also in $L^2(B_{r_0})$, as
$\dz\to 0$.
Thanks to this, multiplying
\eqref{inp1} with $\phi\in W^{1,2}_0(B_{r_0})$, by integration by parts and then letting
$\dz\to 0$ we have
\begin{align}\label{beq}
\int_{B_{r_0}} Dw^0\cdot D\phi\,dx-A\int_{B_{r_0}} w^0\phi
=\int_{B_{r_0}} Du\cdot D\phi\,dx-\int_{B_{r_0}}(Au-K)\phi\,dx.
\end{align}
that  is,     $w^0$ solves \eqref{inp1-1}

To see the above claim that $w^{\dz}\in W^{1,2}_0(B_{r_0})$ uniformly in $\dz\in(0,r_0/2]$, multiplying
\eqref{inp1} with $w^{\dz}$ and integrating, via integration by parts and Young's inequality we have
\begin{align*}
\int_{B_{r_0}}|Dw^{\dz}|^2\,dx&= A\int_{B_{r_0}} |w^{\dz}|^2\,dx-
\int_{B_{r_0}} Dw^{\dz}\cdot Du{^\dz}\,dx+\int_{B_{r_0}}(Au^\delta -K)w^\dz\,dx \\
&\le (A+1)\int_{B_{r_0}} |w^{\dz}|^2\,dx+ \frac12\int_{B_{r_0}} |Dw^{\dz}|^2\,dx+ \frac12\int_{B_{r_0}} [|Du^{\dz}|^2+|Au^{\dz}-K|^2]\,dx.
\end{align*}
By the Sobolev inequality, one has
\begin{align*}
\int_{B_{r_0}}|Dw^{\dz}|^2\,dx
&\le 2C_0(n)[A+1]r^2_0\int_{B_{r_0}} |Dw^{\dz}|^2\,dx+    \int_{B_{r_0}}|Du^\dz|^2\,dx +
 \int_{B_{r_0}}|Au^{\dz}-K|^2\,dx.
\end{align*}
Since $C_0(n)[A+1]r^2_0\le \frac18$ and $u^\dz=u\ast \eta_{\dz}\in W^{1,2}(B_{r_0})$ uniformly in
$\dz\in (0,r_0/2]$, we have
 \begin{align*}
\int_{B_{r_0}}|Dw^{\dz}|^2\,dx
&\le     2 \int_{B_{2r_0}}|Du |^2\,dx +
 2\int_{B_{2r_0}}|Au-K|^2\,dx
\end{align*}
as desired.

Now we show that $ w^0$ is the unique solution to \eqref{inp1-1}.
Assume that  $ \wz w^0 \in W^{1,2}_0(B_{r_0})$ is also a solution  to   \eqref{inp1} different from $ w^0$.
Note that   \eqref{beq} also holds for $  \wz w^0$. Write $  w=w^0-\wz w^0\in W^{1,2}_0(B_{r_0})$. One has
\begin{align*}
\int_{B_{r_0}} D  [w^0-\wz w^0]\cdot D\phi\,dx-A\int_{B_{r_0}}  [w^0-\wz w^0]\phi
=0\quad \forall \phi\in W^{1,2}_0(B_{r_0}),
\end{align*}
that is,   in weak sense one has
$$-\bdz [w^0-\wz w^0]=A[w^0-\wz w^0]\quad{\rm in}\quad B_{r_0};\ w^0-\wz w^0=0\quad{\rm on}\ \partial B_{r_0}.$$
However, since $A\notin \Sigma (r_0)$, by Fredhlom alternative theorem,    $0$ is a unique solution to this equation.
Thus  $  w^0-\wz w^0=0$  in $B_{r_0}$, which is a contradiction.

 Finally, we show that $w^{0}\le 0$.  By \eqref{ef} and the convexity of $f$, we have
$$ \Delta u+Au-K=-f(u)+Au-K\le   f'_-(1)u-f'_-(1)+f(1)+Au-K=0.$$
By this and \eqref{inp1-1}, one has
$
-\bdz    w^0 \le  Aw^0
$, that is $-\Delta(-w^0)\ge A(-w^0)$ in weak sense in $B_{r_0}$.
Set $w^-=-\min\{0,-w^0\}$. Clearly, $w^{-}\in W^{1,2}_0(B_{r_0})$.
Multiplying $
-\bdz    w^0 \le  Aw^0$ with $w^{-}$ and integrating, by integration by parts and Sobolev inequality, one has
$$\int_{B_{r_0}}|Dw^-|^2\,dx\le A\int_{B_{r_0}}|w^-|^2\,dx\le
C_0(n)Ar^2_0\int_{B_{r_0}}|Dw^{-}|^2\,dx.$$
Since \eqref{rfn} gives
$C_0(n)Ar^2_0\le 1/8$, we have
  $w^-= 0$ in $B_{r_0}$ and hence $-w^0\ge 0$ as desired.
\end{proof}

 Lemma \ref{ex} allows us to prove Theorem 1.3 by  adapting the argument of \cite[Proposition 4.2]{cfrs}. We give the details as below.
\begin{proof}[Proof of Theorem \ref{app}]
Given any $x_0\in B_2$ let $r_0\in (0,1)$ be as in Lemma \ref{ex}.
Without loss of generality, we may assume that $x_0=0$ and $r_0=1$.
If  $\esssup_{B_1}f'_-(u)<+\fz$, then   by  the nondecreasing property  of $ f'_-$
(that is,  the convexity of $f$), we have
$$|f(u)|\le  |f(0)|+\max\{f'_-(0),\esssup_{B_1}f'_-(u)\}|u|\ a.\,e.\ {\rm in}\ B_1.$$
Thanks to this, by a standard elliptic theory (see for instance \cite{gt98}), we have $u\in C^2(B_1)$ as desired.
Below we always assume   $\esssup_{B_1}f'_-(u)=+\fz.$    Then one must have
$\esssup_{B_1}u=+\fz;$ otherwise, by
 the nondecreasing  property of $f'_-$,   one has
$$\esssup_{ B_1} f'_-(u )\le f'_-\left(\esssup_{ B_1}u \right)<+\fz.$$    We are going to obtain desired approximation by the following 6 steps.

{\bf Step 1.}
Given any $\ez\in(0,1)$,   set
\begin{align}\label{fe}
f_{\ez}(t):=\left\{
\begin{aligned}
&f(t)\quad&{\rm if}\ t<1/\ez ,\\
&f(1/\ez )+f'_-(1/\ez )(t-1/\ez )\quad&{\rm if}\ t\ge 1/\ez;
\end{aligned}
\right.
\end{align}
in other words,   $ f_\ez$ is obtained from $f$ via replacing $\{f(t)\}_{t\ge 1/\ez}$ by the left half of tangential line
  of $f$ at $ 1/ \ez$, that is, $\{f(1/\ez )+f'_-(1/\ez )(t-1/\ez )\}_{t\ge 1/\ez}$.

Obviously,    we have the following properties:
 \begin{align}\label{low00}
\mbox{ $f_\ez$
and $(f_\ez)'_-$ are nondecreasing, and $(f_\ez)'_-\le f'_-(1/\ez)$,}
\end{align}
and
 \begin{align}\label{low0}     \mbox{
$f\ge f_{\ez_1}\ge f_{\ez_2}$   whenever $0<\ez_1\le \ez_2\le 1$, }  \quad  f = \lim_{\ez\to0} f_\ez \quad  \mbox{and}\quad (f_\ez)'_-\le f'_-.\end{align}
 Consequently,   $f_{\ez}\in \mathcal{C}$ is Lipschitz function in $\rr$,  and hence
\begin{align}\label{low1}
\quad|f_\ez(t)|\le |f(0)|+ f'_-(1/\ez)|t| \quad  \forall t\in\rr.
\end{align}
Thanks to
 \eqref{low1},  $v\in W^{1,2}(B_1)$ implies
$ f_{\ez}(v)\in W^{1,2}(B_1) $. Moreover, since $f=f_\ez $ when $t<1/\ez$, by the convexity of $f_\ez$ we   have
\begin{align}\label{low}
 f_{\ez}(t)\ge (f_\ez)'_-(1)t-[(f_\ez)'_-(1)-f_\ez(1)]= At-K,\quad \forall t\in \rr,
\end{align}
where and below we write $A=f'_-(1)$ and $K=f'_-(1)-f(1)$.

{\bf Step 2.}  By Lemma \ref{ex},  there exists unique  $u^{(0)}\in C^2(B_1)$ satisfying
\begin{align}\label{in0}
-\bdz u^{(0)} =Au^{(0)}-K\quad&{\rm in}\ B_{1}, \quad
u^{(0)}=u\quad{\rm on}\ \partial B_{1};
\end{align}
moreover, $u^{(0)}\le u $ almost everywhere in $B_1$.
Given any $\ez\in(0,1]$, set $u^{(0)}_{\ez}:=u^{(0)}$ and
for any $j\ge 1$,  since $f_\ez$ is Lipschitz in $\rr$,
 there is a function  $u^{(j)}_\ez\in W^{1,2}(B_1)\cap C^2(B_1)$  satisfying
\begin{align}\label{inj}
-\bdz u^{(j)}_{\ez}=f_{\ez}(u^{(j-1)}_{\ez})\in W^{1,2}(B_1)\quad{\rm in}\ B_1;\quad
u^{(j)}_{\ez}=u\quad{\rm on}\ \partial B_1.
\end{align}
We claim that
\begin{itemize}
\item[(i)]     $u^{(0)}\le u^{(j)}_{\ez}\le
u^{(j+1)}_{\ez}\le u \quad\forall \ez\in(0,1],\ \forall j\ge 0;$
\item [(ii)]
 $u^{(j)}_{\ez_2}\le u^{(j)}_{\ez_1}\quad \mbox{ $\forall \ez_1,\ez_2\in(0,1]$ with  $\ez_1< \ez_2$,\  $\forall j\ge 0$; } $
\item[(iii)]  $u^{(j)}_{\ez}\in W^{1,2}(B_1)$ uniformly in
$\ez\in(0,1]$ and $j\ge 0$.
\end{itemize}

{\it Proof of Claim (i)}.
Thanks to \eqref{low0} and $u^{(0)}_\ez=u^{(0)} \le u$ one has
\begin{align*}
-\bdz( u-u^{(1)}_{\ez})=f(u)-f_{\ez}(u^{(0)}_{\ez})
=(f(u)-f_{\ez}(u))+(f_{\ez}(u)-f_{\ez}(u^{(0)}_{\ez}))\ge0
\end{align*}
in weak sense, that is, $u-u^{(1)}_{\ez}\in W^{1,2}_0(B_1)$ is superharmonic.
By the maximum principle, we get $u^{(1)}_{\ez}\le u$  in $B_1$.
On the other hand, with the aid of \eqref{low}  we obtain
\begin{align*}
-\bdz(u^{(1)}_{\ez}-u^{(0)}_{\ez})&=f_{\ez}(u^{(0)}_{\ez})-(Au^{(0)}_{\ez}-K)\ge0,
\end{align*}
and hence, by the maximum principle, we get $u^{(0)}_{\ez}\le u^{(1)}_{\ez}$.

For  any $j\ge2$, if $u^{(j-1)}_{\ez}\le u$,  similarly one has
\begin{align*}
-\bdz( u-u^{(j)}_{\ez})=f(u)-f_{\ez}(u^{(j-1)}_{\ez})
&=(f(u)-f_{\ez}(u))+(f_{\ez}(u)-f_{\ez}(u^{(j-1)}_{\ez}))\ge 0,
\end{align*}
and hence, by the maximum principle, we have $u^{(j)}_{\ez}\le u$.  On the other hand,  by \eqref{low0},
if $u^{(j-2)}\le u^{(j-1)}$, then
\begin{align*}
-\bdz(u^{j}_{\ez}-u^{(j-1)}_{\ez})=f_{\ez}(u^{(j-1)}_{\ez})-f_{\ez}(u^{(j-2)}_{\ez})\ge0,
\end{align*}
and hence, by the maximum principle again  one has $u^{(j-1)}_{\ez}\le  u^{(j)}_{\ez}$.

{\it Proof of Claim (ii)}.
Note that $u^{(0)}_{\ez_1}=u^{(0)}_{\ez_2}$.
For $j\ge 1$,  if $u^{(j-1)}_{\ez_2}\le u^{(j-1)}_{\ez_1}$,
  by \eqref{low0}
one has
\begin{align*}
-\bdz(u^{(j)}_{\ez_1}-u^{(j)}_{\ez_2})=f_{\ez_1}(u^{(j-1)}_{\ez_1})-
f_{\ez_2}(u^{(j-1)}_{\ez_2})\ge 0
\end{align*}
and hence  by the maximum principle, we obtain
 $u^{(j)}_{\ez_2}\le u^{(j)}_{\ez_1}$.

{\it Proof of Claim (iii)}.  Since  (i) implies $|u^{(j)}_{\ez}|\le  |u^{(0)}_{\ez}| +|u|$,
by $ u^{(0)}=  u^{(0)}_\ez  \in L^2(B)$ and  $u\in W^{1,2}(B)$, one knows that $|u^{(j)}_{\ez}|\in L^2(B)$ uniformly in $\ez>0$ and $j\ge0$.
Moreover, observe  that
\begin{align}\label{w12}
\int_{B_1}|Du^{(j)}_{\ez}|^2\,dx
&\le 2\int_{B_{1}}
Du^{(j)}_{\ez}\cdot (Du^{(j)}_{\ez}-Du)\,dx+ \int_{B_1}|Du|^2\,dx.
\end{align}
By  integration
by parts, $u^{(j)}_{\ez}-u\in W^{1,2}_0(B_{1})$ and
  \eqref{inj}, one has
\begin{align*}
&\int_{B_{1}}
Du^{(j)}_{\ez}\cdot (Du^{(j)}_{\ez}-Du)\,dx
=-\int_{B_{1}}f_{\ez}(u^{(j-1)}_{\ez})(u-u^{(j)}_{\ez})\,dx.
\end{align*}
Recalling that (i) gives $u-u^{(j)}_{\ez}\ge 0$  in $B_1$, and  \eqref{low} gives
$f_{\ez}(u^{(j-1)}_{\ez})\ge Au^{(j-1)}_{\ez}-K$, we obtain
\begin{align*}
&\int_{B_{1}}
Du^{(j)}_{\ez}\cdot (Du^{(j)}_{\ez}-Du)\,dx\le
-\int_{B_{1}}(Au^{(j-1)}_{\ez}-K)(u-u^{(j)}_{\ez})\,dx.
\end{align*}
Since $u^{(j)}_{\ez}\in L^2(B_1)$ uniformly in $j,\ez$,   we conclude
$D u^{(j)}_{\ez}\in L^{2}(B_1)$ uniformly in $j,\ez$.

{\bf Step 3.}    By  (iii)  and compactness of Sobolev spaces,
 we know that  for any $\ez\in(0,1]$,
 $u^{(j)}_{\ez}$  converges to some function $u_\ez$ in $L^2(B_1)$ and weakly in $W^{1,2}(B)$ as $j\to\fz$.
Thus
\begin{align*}
\int_{B_1}Du_{\ez}\cdot D\phi\,dx=\lim_{j\to\fz}\int_{B_1}Du^{(j)}_{\ez}\cdot D\phi\,dx
=\lim_{j\to\fz}\int_{B_1}f_{\ez}(u^{(j)}_{\ez})\phi\,dx.
\end{align*}
By the continuity of $f_\ez$, $\lim_{j\to\fz}f_{\ez}(u^{(j)}_{\ez})= f_{\ez}(u_{\ez})$.
Thanks to $(f_\ez)'_-\le f'_-(1/\ez)$, by  (i)  one has
\begin{align*}
|f_{\ez}(u^{(j)}_{\ez})|\le f'_-(1/\ez )|u^{(j)}_{\ez}|+|f(0)|
\le f'_-(1/\ez )(|u^{(0)}|+|u|)+|f(0)|\in L^2(B_1).
\end{align*}
Thus it follows by the Lebesgue's dominated convergence theorem that
\begin{align*}
\int_{B_1}Du_{\ez}\cdot D\phi\,dx =
\int_{B_1} f_{\ez}(u_{\ez})\phi\,dx.
\end{align*}
This implies that
  $u_{\ez} $ is a weak solution  to
  \begin{align}\label{inf}
-\bdz u_{\ez} =f_{\ez}(u_{\ez})\quad {\rm in}\ B_{1};
u_{\ez} =u\quad {\rm on}\ \partial B_{1}.
\end{align}
Thanks to $(f_\ez)'_-\le f'_-(1/\ez)$, by the standard elliptic theory, we know that  $u_\ez\in C^2(B_1)$ (see \cite{gt98}).

Since  $u_{\ez}\le u$, $(f_\ez)'_-\le f'_-$ and $(f_\ez)'_-$ is nondecreasing, we have
$$(f_\ez)'_-(u_{\ez})\le (f_{\ez})'_-(u)\le f'_-(u)$$
and hence
$$\int_{B_{r_0}}(f_\ez)'_-(u_{\ez})\xi^2\,dx
\le \int_{B_{1}}f'_-(u)\xi^2\,dx\le \int_{B_{1}}|D\xi|^2\,dx,$$
which implies that $u_\ez$ is a stable solution.

{\bf Step 4.} By (iii) and the definition of $u_\ez$, we know that $u_{\ez}\in W^{1,2}(B_1)$
uniformly in $\ez\in(0,1)$.  Thus  $u_{\ez}$ converges to some function $  u^{\star}$ in $L^2(B_1)$ and
 weakly in $L^2(B_1)$ as $\ez\to 0$.
Since (ii) implies that  $u^{(0)}\le u_{\ez_2}\le u_{\ez_1}\le u$ whenever $0<\ez_1<\ez_2<1$,
 we know that $u^\star\le u$ a.\,e. in $B_1$. Note that $u^{\star}-u\in W^{1,2}_0(B_1)$.

Next we show that $u^{\star} $ is a weak solution to
 \begin{align}\label{ins}
 -\bdz u^{\star} =f(u^{\star})\quad {\rm in}\ B_{1},\quad
u^{\star} =u\quad &{\rm on}\ \partial B_{1}.
\end{align}
Considering \eqref{inf} and  $Du_{\ez}\to Du^{\star}$ weakly in $L^2(B_1)$ as $\ez\to 0$, we only need to prove that
\begin{align}\label{xx2}\lim_{\ez\to 0}\int_{B_1}f_{\ez}(u_{\ez})\,dx=
\int_{B_1}f(u^{\star})\,dx.\end{align}
To see this, write
\begin{align}\label{xx1}
 &\int_{B_1}|f_{\ez}(u_{\ez})-f(u^{\star})|\,dx\nonumber\\
&=\int_{B_1\cap \{u<1/\ez \}}|f(u_{\ez})-f(u^{\star})|\,dx
+\int_{B_1\cap \{u\ge 1/\ez \}}|f_{\ez}(u_{\ez})-f(u^{\star})|\,dx.
\end{align}
By \eqref{low} and (i), we get
\begin{align}\label{fb2}
|f_{\ez}(u_{\ez})|+|f(u^{\star})|+|f(u_{\ez})|&\le 6[|f(u)|+A|u^{(0)}|+A|u|+K]\in L^1(B_{1}),
\end{align}
Thanks to this and $f(u_{\ez})\to f(u^{\star})$  a.\,e.,  the first term in the right-hand side of \eqref{xx1} tends to zero.
Moreover due to \eqref{fb2} again and  the absolute continuity of integrals, the second term in right-hand side of \eqref{xx1} also tends to zero.
Thus \eqref{xx2} holds as desired.

{\bf Step 5.}  To show   $u=u^{\star}$ a.\,e. in $B_1$, we argue by contradiction. Assume that   $u\not= u^{\star}$ a.\,e. in $B_1$.
Since  $u^{\star}\le u$ a.\,e. in $B_1$ and $f$ is nondecreasing, we have
$$-\bdz (u-u^{\star})=f(u)-f(u^{\star})\ge 0\quad\mbox{ in weak sense},$$
that is, $ u-u^{\star}\in W^{1,2}_0(B_1)$ is superharmonic  in $B_1$.
Since $u\not\equiv u^{\star}$, by the strong maximum principle we have $u-u^\star>0$ a.\,e. in $B_1$.

Since the  convexity of $f$ gives $$f(u^{\star})-f(u)\ge f'_-(u)(u^{\star}-u),$$
by $u^{\star}\le u$ a.\,e. in $B_1$ one has
\begin{align*}
 (f(u)-f(u^{\star}))(u-u^{\star})  \le  f'_-(u)(u-u^{\star})^2 \quad\mbox{a.\,e. in $B_1$ }.
\end{align*}

  On the other hand, applying the stability inequality to  $u-u^{\star}\in W^{1,2}_0(B_1)$ (up to some  approximation via functions in $C^{0,1}_c(B_1)$),
recalling
$
-\bdz (u-u^{\star})=f(u)-f(u^{\star}) $ in $B_1$,  we have
\begin{align*}
 \int_{B_1}f'_-(u)(u-u^{\star})^2\,dx\le\int_{B_1}|D(u-u^{\star})|^2\,dx&=
\int_{B_1}(f(u)-f(u^{\star}))(u-u^{\star})\,dx.
\end{align*}
We then conclude that
$$(f(u)-f(u^{\star}))(u-u^{\star})=f'_-(u)(u-u^{\star})^2\quad{\rm a.e.\ in}\ B_1,$$
and hence
\begin{align*}
f(u)-f(u^{\star})=  f'_-(u) (u-u^{\star}) \quad\mbox{a.\,e. in $B_1$ }.
\end{align*}
By the convexity of $f$, we know that
$f$ is linear in the interval $ [u^\star(x),u(x)]$ for a.\,e. $x\in B_1$.
Via the same argument as in \cite[Proposition 4.2]{cfrs} and \cite[Theorem 16]{df}, one concludes that $f$ is linear on $(\essinf_{B_1}u^{\star},\esssup_{B_1}u)=(\essinf_{B_1}u^{\star},+\fz)$.
That is, $f'_-\le C$ for some constant $C<+\fz$.
This contradicts with
  $\esssup_{B_1}f'_-(u)=+\fz$.
Therefore  $u=u^{\star}$ a.e in $B_1$.

{\bf Step 6.} We claim that $u_{\ez}\to u$ in $W^{1,2}(B_1)$ as $\ez\to0$. Indeed,
thanks to Step 5,
by (iii) we know that $u_{\ez}\to u$ in $L^2(B_1)$ and
$Du_{\ez}\to Du$ weakly in $L^2(B_1)$. It remains to show that
\begin{align}\label{cw12}
\limsup_{\ez\to 0}\int_{B_1}|Du_{\ez}|^2\le \int_{B_1}|Du|^2\,dx.
\end{align}
Testing \eqref{inf} with $u-u_{\ez}\in W^{1,2}_0(B_{1})$ we obtain
\begin{align*}
\int_{B_{1}}Du_{\ez}\cdot (Du-Du_{\ez})\,dx=
\int_{B_{1}}f_{\ez}(u_{\ez})(u-u_{\ez})\,dx.
\end{align*}
It follows by $u^{(0)}\le u_{\ez}\le u$, \eqref{low00} and \eqref{low} that
\begin{align*}
\int_{B_{1}}|Du_{\ez}|^2\,dx&=
\int_{B_{1}}Du_{\ez}\cdot Du\,dx-\int_{B_{1}}f_{\ez}(u_{\ez})(u-u_{\ez})\,dx\\
&\le \int_{B_{1}}Du_{\ez}\cdot Du\,dx-\int_{B_{1}}(Au^{(0)}-K)(u-u_{\ez})\,dx\\
&\le \int_{B_{1}}Du_{\ez}\cdot Du\,dx
+\left(\int_{B_{1}}(Au^{(0)}-K)^2\,dx\right)^{\frac12}
\left(\int_{B_{1}}(u_{\ez}-u)^2\,dx\right)^{\frac12}.
\end{align*}
From this, thanks to $u^{(0)},u\in W^{1,2}(B_1)$, using $u_{\ez}\to u$ in $L^2(B_1)$ and
$Du_{\ez}\to Du$ weakly in $L^2(B_1)$ one gets \eqref{cw12}.
This implies that $u_{\ez}\to u$ in $W^{1,2}(B_1)$ as $\ez\to0$.

\end{proof}

{\bf Acknowledgement.}  The authors would like to thank Professor Xavier Cabr\'e for several valuable comments and suggestions in the previous version of this paper.

\noindent  Fa Peng

\noindent
Academy of Mathematics and Systems Science, the Chinese Academy of Sciences, Beijing 100190, P. R. China

\noindent{\it E-mail }:  \texttt{fapeng@amss.ac.cn}

\bigskip

\noindent Yi Ru-Ya Zhang

\noindent
Academy of Mathematics and Systems Science, the Chinese Academy of Sciences, Beijing 100190, P. R. China

\noindent{\it E-mail }:  \texttt{yzhang@amss.ac.cn}

\bigskip

\noindent  Yuan Zhou

\noindent
School of Mathematical Science, Beijing Normal University, Haidian District Xinjiekou Waidajie No.19, Beijing 10875, P. R. China

\noindent{\it E-mail }:  \texttt{yuan.zhou@bnu.edu.cn}

\end{document}